\documentclass{amsart}
\usepackage{graphicx} 
\usepackage{amsmath}
\usepackage{amssymb} 
\usepackage{amsfonts} 
\usepackage{amsthm}
\usepackage{float}
\usepackage{fdsymbol} 
\usepackage{makecell} 
\usepackage{url}

\title{Co-intersection graphs of nonabelian finite simple groups have diameter two}
\author{Henry Bradford}
\address{Christ’s College, University of Cambridge, St Andrew’s Street, Cambridge, England, CB2 3BU}
\email{hb470@cam.ac.uk}
\author{Kamilla Rekv\'enyi} 
\address{Department of Mathematics, University of Manchester, M13 9PL Manchester, UK. Also
affiliated with: Heilbronn Institute for Mathematical Research, BS8 1UG Bristol, UK.}
\email{kamilla.rekvenyi@manchester.ac.uk}
\date{\today}

\newtheorem{thm}{Theorem}[section]
\newtheorem{lem}[thm]{Lemma}
\newtheorem{propn}[thm]{Proposition}

\newtheorem{defn}[thm]{Definition}
\newtheorem{ex}[thm]{Example}

\newtheorem{conj}[thm]{Conjecture}

\newtheorem{rmrk}[thm]{Remark}
\newtheorem{qu}[thm]{Question}

\DeclareMathOperator{\ccl}{ccl}
\DeclareMathOperator{\Co}{Co}
\DeclareMathOperator{\coIG}{coIG}
\DeclareMathOperator{\diam}{diam}
\DeclareMathOperator{\GL}{GL}
\DeclareMathOperator{\GU}{GU}
\DeclareMathOperator{\IG}{IG}
\DeclareMathOperator{\SL}{SL}
\DeclareMathOperator{\SO}{SO}
\DeclareMathOperator{\Sp}{Sp}
\DeclareMathOperator{\PSL}{PSL}
\DeclareMathOperator{\PSp}{PSp}

\begin{document}

\begin{abstract}
Let $G$ be a finite group. 
The co-intersection graph $\Delta_G ^c$ of $G$ has as its vertices the 
nontrivial proper subgroups of $G$, with edges joining those pairs of 
subgroups which intersect trivially. 
It is clear that every connected component of $\Delta_G ^c$ 
has diameter at most three. 
In this Note, we show that when $T$ is a nonabelian finite simple group, 
$\Delta_T$ is connected of diameter exactly two. 
We also make several observations about the extent to which 
a finite group is determined by its co-intersection graph, 
and about the class of finite graphs arising as co-intersection graphs. 
\end{abstract}

\maketitle

\section{Introduction}

Throughout this paper, $T$ denotes a nonabelian finite simple group. 
For $G$ a finite group, the \emph{intersection graph} $\Delta_G$ of $G$ 
is the finite graph, the vertices of which are given by the nontrivial 
proper subgroups of $G$, and in which two subgroups are adjacent iff 
they intersect nontrivially \cite{CsaPol}. 
Graph-theoretic properties of the intersection graphs of finite groups 
have received substantial attention in the literature 
(see the references in the Introductions to \cite{DevRaj,VisVad}). 
In particular, much is now known about the diameters 
of intersection graphs (see \cite{LeeRek} and references therein). 
Less well-studied is the \emph{co-intersection graph:} 
the graph-theoretic complement $\Delta_G ^c$ of $\Delta_G$. 

For $\Gamma$ a finite connected graph, 
the \emph{diameter} $\diam(\Gamma)$ of $\Gamma$ 
is the greatest path-distance which occurs between pairs of vertices in $\Gamma$. 
Our main result is as follows. 

\begin{thm} \label{MainThmIntro}
For all $T$, $\Delta_T ^c$ is connected, with $\diam (\Delta_T ^c) = 2$. 
\end{thm}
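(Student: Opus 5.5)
Since every nontrivial subgroup contains a subgroup of prime order, it suffices to argue as follows. Given nontrivial proper subgroups $H,K \le T$, we seek a nontrivial proper subgroup $L$ with $L \cap H = L \cap K = 1$. Such an $L$ gives a path of length at most two between $H$ and $K$, so $\Delta_T^c$ is connected of diameter at most two. For the lower bound we exhibit two subgroups at distance exactly two, for instance $H = K$-type pairs such as a subgroup and one of its proper nontrivial subgroups. Concretely, take $H$ of prime order and $K \supset H$ any proper subgroup properly containing $H$; these always exist, since $T$ is not of prime order and its Sylow subgroups or maximal subgroups give such chains. Then $H \cap K = H \neq 1$, so they are non-adjacent and distinct, giving diameter exactly two.

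The natural choice is $L = \langle x \rangle$ cyclic of prime order $r$, chosen so that $\langle x\rangle \not\le H$ and $\langle x \rangle \not\le K$, which for prime order is equivalent to trivial intersection. Thus the goal becomes the following. \emph{No two proper subgroups of $T$ together contain all elements of order $r$, for a suitable prime $r$ (allowed to depend on $H,K$).} It suffices to enlarge $H,K$ to maximal subgroups $M_1, M_2$ and show that some element of prime order lies outside $M_1 \cup M_2$.

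For this key step I would use a counting/covering argument. The union of the conjugacy classes of elements of order $r$ generates $T$, since it is a normal subset and $T$ is simple. So no single proper subgroup contains all of them. The issue is covering by two subgroups. Here I would use that a finite group is never a union of two proper subgroups, adapted to normal subsets. Suppose all elements of order $r$ lie in $M_1 \cup M_2$. Pick $r$ so that a Sylow $r$-subgroup is cyclic or at least so that $r$-elements are plentiful. For instance, take $r$ a primitive prime divisor (Zsigmondy prime) in the Lie type case, or the largest prime in the alternating and sporadic cases, where elements of order $r$ are self-centralising-ish and lie in very few maximal subgroups. Then compare proportions. The proportion of $T$ consisting of elements of order $r$ is roughly $|\{r\text{-elements}\}|/|T|$, while $M_i$ contains at most a fraction bounded via fixed-point ratios. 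Alternatively, use the fact (Guralnick–Kantor, Breuer–Guralnick–Kantor) that every nonabelian simple group has an element $s$, which can be taken of prime order in many cases or replaced by a suitable power, lying in a unique maximal subgroup or in few maximal subgroups. Conjugating $s$ so that its unique overgroup avoids $M_1,M_2$ needs $M_1,M_2$ not conjugate to that overgroup, or else a second choice.

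The cleaner route, which I would try first, is the following. Let $C$ be a conjugacy class of elements of prime order $r$. If $C \subseteq M_1\cup M_2$, then $|C| \le |C\cap M_1| + |C \cap M_2|$. The fixed-point-ratio bound $|C \cap M|/|C| = \mathrm{fpr}$ on $T/M$ is less than $1/2$ for a good choice of $C$, e.g.\ by choosing $r$ with $r$-elements derangements on most actions and using known fpr bounds (Liebeck–Saxl, Burness). This yields a contradiction. The main obstacle is precisely this uniform fixed-point-ratio estimate across all families; small groups and sporadics would be checked by computer (GAP character tables of marks), and alternating groups handled directly by choosing $r$ a prime in $(n/2,n-2]$ with transitive overgroup analysis.
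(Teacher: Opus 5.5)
Your framing matches the paper's. A prime-order element $x\notin M_1\cup M_2$, with maximal subgroups $M_1\supseteq H$ and $M_2\supseteq K$, gives the path $H\text{---}\langle x\rangle\text{---}K$, so everything reduces to Property $(P)$. Your lower bound is also a valid substitute for the paper's appeal to Proposition \ref{CompProp}: take a prime-order subgroup inside a larger proper subgroup, e.g.\ inside a Sylow $2$-subgroup, which has order at least $4$.

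\textbf{The gap.} You never prove Property $(P)$, and that is the whole content of the theorem. You name the uniform fixed-point-ratio estimate as ``the main obstacle'' and then leave it open. It does not follow from the results you cite in the generality you need.
\begin{itemize}
\item The generic Liebeck--Saxl bound $\mathrm{fpr}(x)\le 4/(3q)$ only gives $2/3$ when $q=2$.
\item Whether $\mathrm{fpr}<1/2$ genuinely depends on the class and the action. For example, a transvection of $\Sp_{2m}(2)$ acting on the cosets of $O^-_{2m}(2)$ has fixed point ratio $2^{m-1}/(2^m-1)>1/2$.
\item So you would need, for every family and every pair $M_1,M_2$, one class of prime-order elements that is simultaneously rare in both actions. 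You give no mechanism for producing it.
\end{itemize}
Your fallback ideas do not close the gap either.
\begin{itemize}
\item Elements lying in a unique maximal subgroup (Guralnick--Kantor) generally have composite order. Passing to a prime-order power destroys the unique-overgroup property, and you correctly noted that prime order is essential for $\langle x\rangle\cap H=1$.
\item The fact that a group is not a union of two proper subgroups does not transfer to normal subsets. Whether a normal subset of prime-order elements can be covered by two subgroups is exactly what must be decided.
\end{itemize}

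\textbf{The missing idea.} The paper uses a reduction that makes any uniform estimate unnecessary. You already observe that the elements of order $p$ generate $T$, so no single proper subgroup contains them all. Now choose $p$ to dodge one of the two subgroups. If some prime $p$ divides $\lvert T\rvert$ but not $\lvert M_1\rvert$, then $M_1$ contains no element of order $p$. Any element of order $p$ outside $M_2$ then lies outside $M_1\cup M_2$ (Lemma \ref{PrimeDivLem}).

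Hence the only pairs needing work are those where \emph{both} $M_1$ and $M_2$ have exactly the prime divisors of $T$. Such maximal subgroups are classified by Liebeck--Praeger--Saxl and Burness--Covato (Table \ref{tab:BadPairs}):
\begin{itemize}
\item certain intransitive subgroups of alternating groups;
\item five symplectic/orthogonal families;
\item finitely many further cases.
\end{itemize}
For these the paper verifies $e_r(T)>e_r(M_1)+e_r(M_2)$ directly. It uses a Zsigmondy prime $r$ and centralizer orders for the families, and computer calculation for the rest. Your Zsigmondy-prime and largest-prime heuristics fit naturally into this framework. Without the reduction, however, the Lie-type and sporadic cases of your argument remain unproved.
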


For every finite group $G$, every connected component of $\Delta_G ^c$ 
has diameter at most three (see Lemma \ref{Diam3Lemma} below), 
and the groups for which $\Delta_G ^c$ is connected of diameter one 
(that is, a complete graph) were completely classified in \cite{VisVad} 
(see Proposition \ref{CompProp} below). 
One may exhibit many finite groups $G$ for which $\Delta_G ^c$ is 
connected of diameter exactly three: 
for instance the dihedral groups $D_{2^n}$ for $n \geq 3$ are as such. 
The case of Theorem \ref{MainThmIntro} for which $T$ is an alternating 
group is Proposition 2.17 of \cite{VisVad}. 

We say that a finite group $G$ has Property $(P)$ if, for any two maximal subgroups $M_1 , M_2 \leq G$, there exists an element $g \in G$ of prime order such that $g \notin M_1 \cup M_2$. 
Our key technical result is as follows. 

\begin{propn}
Every nonabelian finite simple group has Property $(P)$. 
\end{propn}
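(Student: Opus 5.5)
We must show that for any two maximal subgroups $M_1, M_2$ of $T$ there is an element of prime order outside $M_1 \cup M_2$. The plan is a counting argument, organized by a single prime $p$ dividing $|T|$.

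For a prime $p$, let $I_p(X)$ denote the number of elements of order $p$ in a group $X$. It suffices to find a prime $p$ with
\[ I_p(M_1) + I_p(M_2) < I_p(T). \]
The most natural choice is to fix a Sylow $p$-subgroup and work with the $p$-part of orders. Suppose $p$ divides $|T|$ but $p \nmid |M_1|$ and $p \nmid |M_2|$. Then every element of order $p$ avoids $M_1 \cup M_2$, and we are done. So the first step is to use known results on primitive prime divisors, in the spirit of Liebeck--Praeger--Saxl factorizations and the classification of maximal subgroups containing elements of given orders. Concretely, we look for a prime (or pair of primes) that no maximal subgroup, or no pair of conjugacy classes of maximal subgroups, can simultaneously capture.

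A cleaner route is via \emph{covering by two subgroups}. Suppose every element of prime order lies in $M_1 \cup M_2$. Take a prime $p$ and a Sylow $p$-subgroup $P$ of $T$. Every element of order $p$ in every conjugate of $P$ lies in $M_1 \cup M_2$. Since $T$ is generated by its elements of order $p$ (by simplicity), neither $M_i$ contains all of them. We then use the standard counting estimate
\[ I_p(M_i) \le |M_i| \le |T|/m(T), \]
where $m(T)$ is the minimal index of a proper subgroup. We compare this against $I_p(T)$, choosing $p$ so that $I_p(T)$ is large: for instance $p=2$ via involution counts, or $p$ the characteristic for Lie type groups, where the number of $p$-elements, i.e.\ unipotent elements, is $|T|_p^2$ by Steinberg. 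Since $|T|_p^2$ is roughly a proportion $1/|T|_{p'}$ of $|T|$, this alone may not beat $2|T|/m(T)$. A better choice is to take $p$ to be a primitive prime divisor $r$ of $q^e-1$ for maximal $e$. Then elements of order $r$ lie in cyclic tori, their number is about $|T|/(e\cdot(\text{torus order}))\cdot(r-1)$, and the maximal subgroups containing them are classified (Guralnick--Penttila--Praeger--Saxl). In the bad cases, where both $M_i$ are overgroups of such tori, we switch to a second primitive prime divisor for a different $e'$. The aim is to show that no pair of maximal subgroups contains all elements of both orders $r$ and $r'$.

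The case split is therefore as follows. Alternating groups are already handled, since $\Delta^c$ for them is done in \cite{VisVad}, though we re-prove (P) directly: use $p$-cycles with $p$ the largest prime $\le n$, which lie only in $A_n$-primitive or transitive overgroups, and use $3$-cycles. Sporadic groups are handled by character-table and maximal-subgroup data from the ATLAS, via fusion of prime-order classes, possibly with a computer check. Groups of Lie type are handled via two primitive prime divisors $r_1, r_2$ (Zsigmondy exceptions treated separately), for which the overgroups of order-$r_1$ elements and order-$r_2$ elements are known. If $M_1$ and $M_2$ are not of the few types containing both, one of the primes is entirely missed or counted too rarely. For each type containing both, we directly count, e.g.\ showing $I_{r}(M_1)+I_r(M_2) < I_r(T)$ using the fact that a maximal torus of order divisible by $r$ lies in few conjugates of $M_i$.

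The main obstacle is the Lie type case with small rank or small $q$ and the Zsigmondy exceptions, together with the "both $M_i$ contain a Singer-type torus" configurations (e.g.\ $\PSL_n(q)$ with $M_1$ a field-extension subgroup and $M_2$ a parabolic). There I would fall back on a third prime, typically the characteristic $p$, using that unipotent elements of order $p$ (transvections or long root elements) are numerous and lie in few extension-field subgroups. The small leftover groups I would check computationally.
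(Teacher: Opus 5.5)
Your counting framework, finding a prime $p$ with $I_p(M_1)+I_p(M_2)<I_p(T)$, is the right one. But there is a genuine gap, and it starts with a reduction you have the ingredients for and never make. You note that if $p \nmid \lvert M_1 \rvert$ then no element of order $p$ lies in $M_1$. You also note that $T$ is generated by its elements of order $p$, so $M_2$ cannot contain all of them. However, you only use the first fact under the much stronger hypothesis that $p$ divides neither $\lvert M_1\rvert$ nor $\lvert M_2\rvert$. Combined, the two facts settle the pair $(M_1,M_2)$ as soon as \emph{one} of $M_1, M_2$ misses some prime divisor of $\lvert T\rvert$. So only pairs in which both $M_i$ have exactly the same prime divisors as $T$ need any work. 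Such maximal subgroups are classified by Liebeck--Praeger--Saxl, as tabulated by Burness--Covato in Table~\ref{tab:BadPairs}; the relevant result is not their factorization theorem. The list consists of certain intransitive subgroups of $A_n$, five symplectic/orthogonal families, and finitely many further pairs. This reduction turns the problem into a short explicit verification. Without it you are left with an open-ended programme over all groups of Lie type and all overgroups of pairs of primitive prime divisors.

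That programme is also not carried out. You never determine which maximal subgroups contain elements of both orders $r_1$ and $r_2$, and you never perform the ``direct count''. The overgroup classification you invoke (Guralnick--Penttila--Praeger--Saxl) covers classical groups only, so exceptional groups of Lie type are not addressed at all.

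Your one concrete problematic configuration is mistaken. A parabolic subgroup of $\PSL_n(q)$ fixes a proper nonzero subspace, whereas an element whose order is a primitive prime divisor of $q^n-1$ acts irreducibly. So parabolics contain no such elements, and after the reduction such pairs need no attention.

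Likewise, for $A_n$ the $p$-cycles with $p>n/2$ the largest prime $\le n$ lie in intransitive subgroups, not in primitive or transitive ones; these are exactly row (a). You still need a count there. $A_n$ has $\binom{n}{p}(p-1)!$ such $p$-cycles, while a row-(a) subgroup has at most $\binom{n-1}{p}(p-1)!$, and $\binom{n}{p} > 2\binom{n-1}{p}$ because $p>n-p$.

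What the reduced problem actually requires is the following:
\begin{itemize}
\item For rows (b)--(f), choose an odd primitive prime divisor $r$ of $q^{2m}-1$ or $q^{2m-2}-1$, large enough that the elements of order $r$ form a single class in both $T$ and $M$. Then use known centralizer orders to show $e_r(T)/e_r(M) = \lvert T:M\rvert\,\lvert C_M(x)\rvert/\lvert C_T(x)\rvert > 2$. This is the precise form of your heuristic that an $r$-element lies in few conjugates of $M_i$.
\item Handle groups $T$ containing bad subgroups of two different types with a common prime, for example $\Sp_4(q)$ with $q$ even, rows (b) and (f).
\item Check the finitely many remaining pairs by computer, including $(\Omega_8^+(2),\Sp_6(2))$.
\end{itemize}
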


Theorem \ref{MainThmIntro} is then immediate from the classification of finite groups 
with complete co-intersection graph (discussed above) and the following Lemma. 

\begin{lem}
Let $G$ be a finite group. 
Suppose $G$ has at least two nontrivial proper subgroups. 
If $G$ has Property $(P)$ then $\Delta_G ^c$ is connected 
with $\diam (\Delta_G ^c) \leq 2$. 
\end{lem}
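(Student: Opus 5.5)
Given two distinct vertices $H_1, H_2$ of $\Delta_G^c$ (nontrivial proper subgroups of $G$), we need to show they are at distance at most $2$. That is, either $H_1 \cap H_2 = 1$, or there is a third nontrivial proper subgroup $K$ with $K \cap H_1 = K \cap H_2 = 1$. Connectedness then follows automatically, since every pair of vertices is joined by a path of length at most two. The hypothesis that $G$ has at least two nontrivial proper subgroups guarantees the graph has at least two vertices, so the diameter statement is meaningful.

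The plan is to use Property $(P)$ to produce $K$ as a subgroup of prime order. Since $G$ is finite and $H_1, H_2$ are proper, choose maximal subgroups $M_1 \geq H_1$ and $M_2 \geq H_2$. If $M_1 = M_2$ this is still allowed, since Property $(P)$ is stated for any two maximal subgroups, not necessarily distinct. Property $(P)$ then gives an element $g$ of prime order $p$ with $g \notin M_1 \cup M_2$. Set $K = \langle g \rangle$, which is cyclic of order $p$. Every nontrivial element of $K$ generates $K$, so any nontrivial element of $K \cap H_i$ would give $g \in H_i \leq M_i$, which is a contradiction. Hence $K \cap H_1 = K \cap H_2 = 1$.

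It remains to check that $K$ is a legitimate vertex distinct from $H_1$ and $H_2$. First, $K$ is nontrivial. Second, $K$ is proper: if $K = G$, then $G$ is cyclic of prime order and has no nontrivial proper subgroups, contradicting the hypothesis. Third, $K \neq H_i$, since $g \notin H_i$. So $H_1 - K - H_2$ is a path of length two whenever $H_1, H_2$ are not already adjacent, and $\operatorname{diam}(\Delta_G^c) \leq 2$.

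There is no real obstacle here; the only points needing care are the degenerate checks. These are that maximal subgroups exist (because $G$ is finite and $H_i$ is proper), that $M_1 = M_2$ is permitted, and that $K$ is proper. The substantive work lies entirely in establishing Property $(P)$ for simple groups.
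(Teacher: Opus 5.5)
Your proposal is correct and is essentially the paper's proof: choose maximal subgroups $M_i \geq H_i$, use Property $(P)$ to get $g$ of prime order with $g \notin M_1 \cup M_2$, and take the path $H_1$, $\langle g \rangle$, $H_2$. Your extra checks (that $M_1 = M_2$ is allowed, that $\langle g \rangle$ is proper, and that it differs from each $H_i$) are details the paper leaves implicit.
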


\begin{proof}
Suppose $G$ has Property $(P)$, and let $H_1 , H_2 \leq G$ be nontrivial proper subgroups. 
For $i=1,2$, let $M_i \leq G$ be a maximal subgroup containing $H_i$. 
Let $g \in G$ be as in Property $(P)$. 
Then $H_1 \text{---} \langle g \rangle \text{---} H_2$ is a path in $\Delta_G ^c$. 
\end{proof}

\begin{lem} \label{PrimeDivLem}
Let $p$ be a prime divisor of $\lvert T \rvert$ and suppose $M_1 , M_2 \leq T$ are maximal subgroups 
such that $p \nmid \lvert M_1 \rvert$. 
Then there is an element $g$ of order $p$ such that $g \notin M_1 \cup M_2$. 
\end{lem}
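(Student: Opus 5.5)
Since $p \nmid \lvert M_1 \rvert$, no element of order $p$ lies in $M_1$. It therefore suffices to find an element of order $p$ outside $M_2$. I would argue by counting, or more simply by a generation argument.

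\textbf{Step 1: $\langle g \rangle \cap M_1 = 1$.} Let $g$ be any element of order $p$ in $T$; one exists by Cauchy's theorem. Then $g \notin M_1$, since otherwise $p$ would divide $\lvert M_1 \rvert$ by Lagrange.

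\textbf{Step 2: some element of order $p$ avoids $M_2$.} Let $N$ be the subgroup of $T$ generated by all elements of order $p$. It is a characteristic, hence normal, subgroup of $T$, and it is nontrivial because it contains an element of order $p$. Since $T$ is simple, $N = T$. If every element of order $p$ lay in $M_2$, then $T = N \leq M_2$, contradicting that $M_2$ is a proper subgroup. So some $g$ of order $p$ satisfies $g \notin M_2$.

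\textbf{Step 3: conclusion.} By Step 1 this $g$ is also not in $M_1$, so $g \notin M_1 \cup M_2$, as required.

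There is no real obstacle here. The only points to check are that the subgroup generated by the order-$p$ elements is normal, since the set of such elements is closed under conjugation, and that it is nontrivial. Simplicity of $T$ then forces it to equal $T$. Maximality of $M_1$ and $M_2$ is not used beyond their properness.
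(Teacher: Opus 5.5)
Your proposal is correct and follows the paper's own argument: the elements of order $p$ generate a nontrivial normal subgroup, hence all of $T$ by simplicity, so one of them lies outside $M_2$, and by Lagrange it also lies outside $M_1$. You fill in the details the paper leaves implicit, namely Cauchy's theorem for nontriviality and closure under conjugation for normality.
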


\begin{proof}
Let $X_p$ be the set of all elements of $T$ of order $p$, 
so that by simplicity of $T$, $\langle X_p \rangle = T$. 
Then there exists $g \in X_p$ such that $g \notin M_2$, 
and so $g \notin M_1 \cup M_2$. 
\end{proof} 

\begin{propn}
    Suppose $T$ does not contain a maximal subgroup $M$ such that $\lvert T \rvert$ and 
    $\lvert M \rvert$ have the same prime divisors. 
    Then $T$ has Property $(P)$. 
\end{propn}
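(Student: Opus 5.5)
This follows almost immediately from Lemma \ref{PrimeDivLem}. Let $M_1, M_2 \leq T$ be any two maximal subgroups; we must produce an element of prime order lying outside $M_1 \cup M_2$. By hypothesis, $M_1$ is not a maximal subgroup whose order has the same prime divisors as $\lvert T \rvert$. Since $\lvert M_1 \rvert$ divides $\lvert T \rvert$, every prime divisor of $\lvert M_1 \rvert$ is a prime divisor of $\lvert T \rvert$. Hence the failure of equality of prime-divisor sets means there is a prime $p$ with $p \mid \lvert T \rvert$ but $p \nmid \lvert M_1 \rvert$.

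We then apply Lemma \ref{PrimeDivLem} to the prime $p$ and the ordered pair $(M_1, M_2)$. It yields an element $g$ of order $p$ with $g \notin M_1 \cup M_2$. Since $p$ is prime, $g$ has prime order, so it is exactly the element Property $(P)$ requires. As $M_1, M_2$ were arbitrary, $T$ has Property $(P)$.

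The two maximal subgroups play asymmetric roles in Lemma \ref{PrimeDivLem}: only $M_1$ needs to have order coprime to $p$. The hypothesis applies to every maximal subgroup, so it is enough to use it for $M_1$. No other case needs treatment. In particular, the case $M_1 = M_2$ is covered by the same argument.

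There is no real obstacle here. The only point to make explicit is the divisibility observation that the prime divisors of $\lvert M_1 \rvert$ always form a subset of those of $\lvert T \rvert$, so ``not the same'' forces a prime dividing $\lvert T \rvert$ but missing from $\lvert M_1 \rvert$. The genuine difficulty lies elsewhere in the paper: handling the groups $T$ that do possess a maximal subgroup with the same prime divisors, which presumably requires the classification of such pairs $(T, M)$.
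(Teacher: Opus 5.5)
Your proof is correct and follows the paper's route exactly: the paper declares the result immediate from Lemma \ref{PrimeDivLem}. You have written out the one implicit step, namely that the prime divisors of $\lvert M_1 \rvert$ are a subset of those of $\lvert T \rvert$, so the hypothesis produces a prime $p \mid \lvert T \rvert$ with $p \nmid \lvert M_1 \rvert$, to which the lemma applies.
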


\begin{proof}
    This is now immediate from Lemma \ref{PrimeDivLem}. 
\end{proof}

A list of all pairs $(T,M)$ such that $M$ is a maximal subgroup of $T$, with 
$\lvert T \rvert$ and $\lvert M \rvert$ having the same prime divisors, 
is provided by Burness and Covato \cite{BurCov}, 
following \cite{LiePraSax}. 
For a finite group $G$ and $n > 1$, 
let $e_n(G)$ be the number of elements of $G$ of order $n$. 
By Lemma \ref{PrimeDivLem}, it suffices for each pair of entries 
$(T,M_1) , (T,M_2)$ on the list from \cite{BurCov}, 
to exhibit a prime $p$ for which: 
\begin{equation} \label{MainIneq}
e_p(T) > e_p(M_1) + e_p(M_2). 
\end{equation}
Besides a family of pairs for which $T$ is alternating, 
which by Proposition 2.17 of \cite{VisVad} we can ignore, 
the list from \cite{BurCov} consists of (i) several infinite families of 
pairs $(T,M)$ for which both $T$ and $M$ are either 
symplectic or orthogonal groups, 
and (ii) a finite list of exceptional pairs, 
including several for which one of $T$ or $M$ is sporadic. 
Following some further preliminary observations in Section \ref{PrelimSect}, 
we prove the inequality (\ref{MainIneq}) in Section \ref{ClassicalSect} 
when both $(T,M_1) $ and $ (T,M_2)$ are of type (i) (with one exception), 
and in Section \ref{ExceptionSect} otherwise 
(the verification for the exceptional pairs 
being via computations in GAP\cite{GAP} and Magma\cite{Magma}). 
We conclude the article with a few remarks and questions 
concerning the extent to which a finite group is determined by 
its co-intersection graph (Section \ref{RecogSect}) 
and concerning co-intersection graphs of infinite groups (Section \ref{InfiniteSect}). 
Some highlights of this discussion are the following. 

\begin{propn}[Proposition \ref{RandomProp}]
An Erd\H{o}s-R\'{e}nyi random graph is not isomorphic to the 
co-intersection graph (or the intersection graph) 
of any finite group. 
\end{propn}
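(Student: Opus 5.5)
The plan is to find a graph-theoretic property that every intersection graph and every co-intersection graph of a finite group has, except for a few degenerate cases, and that the Erd\H{o}s--R\'{e}nyi random graph $G(n,1/2)$ fails asymptotically almost surely as $n \to \infty$. I read ``is not isomorphic'' in this asymptotic sense. The property I intend to use is \emph{nested neighbourhoods}: two distinct vertices $u,v$ such that $N(u)\setminus\{v\} \subseteq N(v)\setminus\{u\}$.

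\textbf{Step 1: comparable subgroups have nested neighbourhoods.} Let $G$ be a finite group and suppose $1 < H < K < G$. In $\Delta_G$, any subgroup $L$ with $L \cap H \neq 1$ also satisfies $L \cap K \neq 1$. Hence $N_{\Delta_G}(H) \setminus \{K\} \subseteq N_{\Delta_G}(K)$. Dually, in $\Delta_G^c$, any $L$ with $L \cap K = 1$ satisfies $L \cap H = 1$. Hence $N_{\Delta_G^c}(K) \subseteq N_{\Delta_G^c}(H)$. Note that $H$ and $K$ are non-adjacent in $\Delta_G^c$ and adjacent in $\Delta_G$, so the exclusion of the partner vertex is harmless in both cases. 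So whenever $G$ has a chain $1 < H < K < G$, both graphs contain a pair of vertices with nested open neighbourhoods.

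\textbf{Step 2: the degenerate case.} Suppose $G$ has no such chain. Then every nontrivial proper subgroup is both minimal and maximal, so it has prime order. Any two distinct such subgroups then intersect trivially. Therefore $\Delta_G$ is edgeless and $\Delta_G^c$ is complete.

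\textbf{Step 3: the random graph avoids both structures.} For a fixed ordered pair $(u,v)$ of distinct vertices of $G(n,1/2)$, the event $N(u)\setminus\{v\} \subseteq N(v)$ requires that, for each of the other $n-2$ vertices $w$, it is not the case that $w \sim u$ and $w \not\sim v$. These events are independent across $w$, each with probability $3/4$. So the event has probability $(3/4)^{n-2}$. A union bound over at most $n^2$ ordered pairs gives total probability at most $n^2 (3/4)^{n-2} \to 0$. Also, the probability that $G(n,1/2)$ is edgeless or complete is $2 \cdot 2^{-\binom{n}{2}} \to 0$.

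Consequently, a.a.s.\ $G(n,1/2)$ has no nested pair and is neither edgeless nor complete. Such a graph cannot be $\Delta_G$ or $\Delta_G^c$ for any finite group $G$: Step 1 forces a nested pair when a chain exists, and Step 2 forces an edgeless or complete graph otherwise. Groups with at most one nontrivial proper subgroup give graphs on at most one vertex, which is irrelevant for large $n$.

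The only point needing care is the handling of the partner vertex in Step 1, i.e.\ making sure the nesting is stated for open neighbourhoods with the other vertex removed, so that the same probabilistic estimate covers both $\Delta_G$ and $\Delta_G^c$. I expect no real obstacle beyond this bookkeeping.
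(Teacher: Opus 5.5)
Your proof is correct, but it takes a different route from the paper's. The paper deduces the statement from Proposition~\ref{AutomorphProp}: for every finite group $G$ with at least two nontrivial proper subgroups, both $\Delta_G$ and $\Delta_G^c$ have a nontrivial graph automorphism. That proof has three parts:
\begin{itemize}
\item automorphisms of $G$ act on its subgroups, giving graph automorphisms;
\item if this action is trivial, every subgroup is characteristic, so $G$ is cyclic by Lemma~\ref{CharCycLem}, which relies on Dedekind's classification of Hamiltonian groups;
\item for cyclic $G$ an automorphism is written down by hand.
\end{itemize}
This proposition is then combined with the Erd\H{o}s--R\'enyi theorem that random graphs are asymmetric with high probability.

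You replace the automorphism obstruction by a nested-neighbourhood obstruction. The group theory it needs is only that $L \cap H \neq 1$ persists when $H$ is enlarged. You also replace the asymmetry theorem by a direct first-moment bound $n^2(3/4)^{n-2}$. Your route is self-contained and elementary, and gives an explicit exponential rate. The paper's route proves a structural fact of independent interest about all (co-)intersection graphs, but depends on two external results.

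The two obstructions are genuinely independent for general graphs. The cycle $C_5$ has nontrivial automorphisms but no nested pair. Conversely, an asymmetric tree (these exist on $7$ vertices) has a nested pair, namely a leaf and any vertex at distance two from it.

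One simplification: your Step~2 and the separate estimate for edgeless or complete graphs can be dropped. In an edgeless or complete graph on at least two vertices, every pair of vertices is already nested (indeed twins). So every (co-)intersection graph with at least two vertices has a nested pair, and the union bound alone finishes the argument.
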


\begin{propn}[Proposition \ref{InfManyGenusProp}]
There are infinitely many positive integers $k$ such that there exists 
a finite graph $\Gamma_k$ for which there are exactly $k$ isomorphism classes 
of finite groups $G$ satisfying $\Delta_G ^c \cong \Gamma_k$ 
(indeed this is possible with $\Gamma_k$ a complete graph). 
\end{propn}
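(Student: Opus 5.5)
The plan is to take $\Gamma_k$ to be a complete graph and to count groups using the classification of finite groups with complete co-intersection graph from \cite{VisVad} (Proposition \ref{CompProp}).

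\textbf{Step 1: list the groups with complete co-intersection graph.} First I would recall, or quickly rederive, what that classification says. If $\Delta_G^c$ is complete, then any two distinct nontrivial proper subgroups meet trivially. So no nontrivial proper subgroup contains another one, and every nontrivial proper subgroup has prime order. A short argument then shows $\lvert G\rvert$ is $p^2$ or $pq$ for primes $p,q$. Every group of order $p^3$, or with three prime factors counted with multiplicity, has a chain of subgroups of orders $r$ and $rs$, which is not allowed. The resulting list is:
\begin{itemize}
\item $C_{p^2}$, whose graph is $K_1$;
\item $C_{pq}$ with $p\neq q$, whose graph is $K_2$;
\item $C_p\times C_p$, whose graph is $K_{p+1}$;
\item the nonabelian groups $C_p\rtimes C_q$ with $q\mid p-1$, whose graph is $K_{p+1}$, since there is one normal Sylow $p$-subgroup and $p$ Sylow $q$-subgroups.
\end{itemize}
I expect this to match Proposition \ref{CompProp} exactly, and I would cite it rather than reprove it.

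\textbf{Step 2: count the groups with $\Delta_G^c \cong K_{p+1}$.} Fix a prime $p\ge 3$, so that $n=p+1\ge 4$ rules out the small graphs $K_1$ and $K_2$. By the list, the vertex count $n$ of a noncyclic example satisfies $n-1=p$ with $p$ the unique prime for which $p+1=n$. For each prime $q\mid p-1$ there is exactly one nonabelian group of order $pq$ up to isomorphism. Hence the groups $G$ with $\Delta_G^c\cong K_{p+1}$ are exactly $C_p\times C_p$ together with one nonabelian $C_p\rtimes C_q$ for each prime divisor $q$ of $p-1$. Their number is
\[
k(p)=1+\omega(p-1),
\]
where $\omega$ denotes the number of distinct prime divisors.

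\textbf{Step 3: show $k(p)$ takes infinitely many values.} It suffices that $\omega(p-1)$ is unbounded as $p$ ranges over primes. Given $m$, let $N$ be the product of the first $m$ primes. By Dirichlet's theorem there is a prime $p\equiv 1 \pmod N$, and then $\omega(p-1)\ge m$. So the set $\{1+\omega(p-1)\}$ is infinite, and each value $k$ in it is realised by $\Gamma_k=K_{p+1}$ for a suitable $p$.

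\textbf{Main obstacle.} The only delicate point is making sure the classification in Step 1 is complete and that no other group yields $K_{p+1}$. In particular I must confirm that $C_{p^2}$ and $C_{pq}$ give only $K_1$ and $K_2$. I also need that no group of order $p'q'$, with $p'\neq p$, has exactly $p+1$ nontrivial proper subgroups. This holds because such a group has $1+p'$ of them, so $p'=p$ is forced in the nonabelian case. Once Proposition \ref{CompProp} is stated in this explicit form, the rest is the elementary count above.
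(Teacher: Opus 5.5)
Your proposal is correct and follows essentially the same route as the paper. The paper also takes $\Gamma=K_{p+1}$, reads off $1+\omega(p-1)$ groups from the classification of groups with complete co-intersection graph (its Example on complete graphs), and applies Dirichlet's theorem to a prime $p\equiv 1$ modulo a product of $n$ distinct primes to show that $\omega(p-1)$ is unbounded.
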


\begin{propn}[Proposition \ref{AbsRecogInftyProp}]
There are infinitely many isomorphism classes of finite groups $G$ for which, 
if $H$ is a finite group satisfying $\Delta_G ^c \cong \Delta_H ^c$, 
then $G \cong H$ (indeed this is possible with $G$ metacyclic). 
\end{propn}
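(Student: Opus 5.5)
We prove Proposition \ref{AbsRecogInftyProp} by exhibiting an explicit infinite family of metacyclic groups and showing that each one is recovered up to isomorphism from its co-intersection graph. The candidate family is $G_q = C_p \rtimes C_q$, where $p,q$ are primes with $q \mid p-1$, and we vary over infinitely many such pairs. This is the nonabelian group of order $pq$, which is metacyclic. Its nontrivial proper subgroups are the unique normal $C_p$ and the $p$ Sylow $q$-subgroups. Any two distinct subgroups of prime order intersect trivially, so $\Delta_{G}^c \cong K_{p+1}$ is complete. This choice may fail, because a complete graph on $p+1$ vertices is typically realised by several groups (for instance an elementary abelian group), and Proposition \ref{InfManyGenusProp} suggests complete graphs are exactly where uniqueness breaks down. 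So my first plan is instead to choose groups whose graph is \emph{not} complete and carries rigid structure.

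A better candidate is therefore $G = C_{p^2}$-by-something, or more simply the cyclic groups $C_{p^n}$. These are trivially metacyclic. For $G = C_{p^n}$ with $n \ge 3$, the nontrivial proper subgroups form a chain of length $n-1$, and any two of them intersect nontrivially. Hence $\Delta_G^c$ is the edgeless graph on $n-1$ vertices. The key steps are as follows.

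\begin{enumerate}
\item Suppose $\Delta_H^c$ has no edges. Then any two nontrivial proper subgroups of $H$ intersect nontrivially. In particular, two distinct subgroups of prime order would intersect trivially, so $H$ has a unique subgroup of prime order.
\item Such $H$ is a $p$-group, since distinct primes would give two such subgroups. A $p$-group with a unique subgroup of order $p$ is cyclic or generalized quaternion.
\item The generalized quaternion groups $Q_{2^m}$ have several noncomparable cyclic subgroups of order $4$. These still all meet in the centre, so the graph is edgeless there too. We must distinguish them by vertex count: $Q_{2^m}$ has many more subgroups than $m$. The vertex count is $n-1$ for $C_{p^n}$, while it is much larger for quaternion groups of the relevant orders.
\item Thus $H$ is cyclic of prime-power order $p^{k}$ with $k-1 = n-1$. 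However, the prime $p$ is not recovered, which breaks uniqueness.
\end{enumerate}

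Step 4 is the main obstacle: edgeless graphs do not see the prime. To fix this, I would use $G = C_{p}\times C_p$ instead. Its nontrivial proper subgroups are $p+1$ lines pairwise meeting trivially, so the graph is $K_{p+1}$, and this reintroduces the non-uniqueness from Proposition \ref{CompProp}. So the plan I would actually pursue is to combine both structures. Take $G = C_{p^2} \times C_q$, or more generally cyclic groups $C_{p^a q^b}$ with suitable exponents. The subgroup lattice is then a product of chains, and in $\Delta_G^c$ a subgroup of $p$-power order is adjacent exactly to those of $q$-power order. The graph is the complete bipartite graph $K_{a,b}$ together with isolated vertices for the mixed subgroups.

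From such a graph one reads off $a$ and $b$, but again not the primes, so cyclic groups fail. I therefore expect that one must use the prime-order subgroups to encode the prime: a nonabelian metacyclic group $C_p \rtimes C_{q^2}$ has $p$ Sylow $q$-subgroups, and these are visible as a clique-like configuration of size $p$ in the graph. The plan is then:
\begin{enumerate}
\item Compute $\Delta_G^c$ for $G = C_p \rtimes C_{q^2}$ with faithful action of order $q^2$.
\item Show that any $H$ with the same graph has the same number of subgroups of prime order, and the same order-type structure. Use the characterisation of nilpotence and of Sylow counts via adjacency patterns to pin down $|H| = pq^2$.
\item Conclude by the classification of groups of order $pq^2$.
\end{enumerate}

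The hard step is the second one, namely recovering $|H|$ from the graph. I would try to do this by identifying the subgroups of prime order as those vertices whose neighbourhoods are maximal.
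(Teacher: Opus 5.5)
Your final family, $G_{p,q}=C_p\rtimes C_{q^2}$ with faithful action, is the one the paper uses. However, the plan built on it breaks at Step 2: the order $\lvert H\rvert = pq^2$ cannot be recovered from the graph, because $q$ is invisible to it. The nontrivial proper subgroups of $G_{p,q}$ are:
$C_p$;
the unique subgroup $N$ of order $pq$;
the $p$ Sylow $q$-subgroups $Q_1,\ldots,Q_p$;
and their order-$q$ subgroups $R_i<Q_i$, which are pairwise distinct since $N_G(R_i)=Q_i$.
In $\Delta^c_{G_{p,q}}$ the vertex $N$ is isolated, and $C_p$ is adjacent to every $R_i$ and $Q_i$. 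For $i\neq j$, each of $R_i,Q_i$ is adjacent to each of $R_j,Q_j$, while $R_i,Q_i$ are not adjacent. This graph depends only on $p$. For example, $C_{37}\rtimes C_4$ and $C_{37}\rtimes C_9$ have isomorphic co-intersection graphs but different orders.

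So the most the graph can give is $H\cong G_{p,r}$ for some prime $r$ with $r^2\mid p-1$. Absolute recognizability therefore requires choosing $p$ so that $q$ is the \emph{only} prime whose square divides $p-1$, and then proving there are infinitely many such $p$. That number-theoretic step is entirely absent from your proposal, and it is not routine. The paper takes $q=2$ and uses Dirichlet for $p\equiv 1 \pmod 4$ (relative density $1/2$). It combines this with Theorem 6 of \cite{BrouZhou}: primes with $p-1=2^e\bar p$, $\bar p$ odd squarefree, have relative density $>0.74$. Together these give a set of primes of relative density $>0.24$.

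The second gap is the step you yourself flag as hard: showing that any finite $H$ with $\Delta^c_H\cong\Delta^c_{G_{p,q}}$ is of the form $C_p\rtimes C_{r^2}$ at all. Your heuristic (prime-order subgroups have maximal neighbourhoods) recovers at best $\omega(\Delta^c_H)$, the number of minimal subgroups. It says nothing about the order, solvability or structure of $H$, and ``Sylow counts via adjacency patterns'' is not an argument.

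The paper does not attempt this from scratch. It observes that $C_p$ is the unique leaf of $\Delta_{G_{p,q}}$, since it meets only $N$ nontrivially. It then invokes Theorem 3.1 of \cite{ShaKohNonSimple}, which classifies the finite nonsimple groups whose intersection graph has a unique leaf as precisely the groups $C_p\rtimes C_{q^2}$. Finally it uses $\omega(\Delta^c)$ to separate the two types and read off $p$: this equals $2$ when the action is not faithful (so there is a cyclic subgroup of order $pq$), and $p+1$ when it is faithful. Without such an external classification or a genuine substitute, Steps 2--3 of your plan do not amount to a proof.
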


\section{Preliminaries} \label{PrelimSect}

The following is a slight generalisation of the final part of 
Proposition 2.1 from \cite{VisVad}, with the same proof. 

\begin{lem} \label{Diam3Lemma}
For every finite group $G$, every connected component of $\Delta_G ^c$ has 
diameter at most three. 
\end{lem}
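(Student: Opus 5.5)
The plan is to show that any two vertices $H_1, H_2$ lying in the same connected component of $\Delta_G^c$ are joined by a path of length at most three. The key reduction is to pass to subgroups of prime order. Since a nontrivial subgroup contains an element of prime order, we may choose cyclic subgroups $A_1 \leq H_1$ and $A_2 \leq H_2$ of prime order. Two facts about such a subgroup $A$ of prime order will drive the argument.
\begin{enumerate}
\item For any subgroup $K$, either $A \cap K = 1$ or $A \leq K$, because $A$ has no nontrivial proper subgroups.
\item If $K \cap H = 1$ and $A \leq H$, then $K \cap A = 1$. In other words, neighbours of $H$ are neighbours of every subgroup of $H$.
\end{enumerate}

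First dispose of the easy cases. If $H_1 \cap H_2 = 1$, the distance is one. Otherwise, since $H_1, H_2$ lie in the same component and $H_1$ is not isolated, $H_1$ has some neighbour $K_1$; likewise $H_2$ has some neighbour $K_2$.

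The main step is to find a short path. Take a shortest path $H_1 = X_0, X_1, \dots, X_n = H_2$ and suppose $n \geq 4$; I aim for a contradiction by shortcutting. Consider $A$ of prime order in $X_1$. By fact (2), $A$ is adjacent to $X_0$ and to $X_2$ (since $X_1$ meets both trivially). Thus we may replace intermediate vertices by prime-order subgroups without lengthening the path. So assume $X_1, \dots, X_{n-1}$ all have prime order.

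Now look at $X_1$ and $X_3$, both of prime order, which are at distance $2$ in a shortest path. By fact (1), either $X_1 \cap X_3 = 1$, contradicting minimality, or $X_1 = X_3$, again contradicting minimality since the path could be shortened. This argument only uses $X_1, X_3$ being interior vertices, which holds when $n \geq 4$. Hence $n \leq 3$.

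The main obstacle is just the bookkeeping to ensure the replaced vertices remain nontrivial proper subgroups and that the replacement preserves adjacency at both ends. Fact (2) handles adjacency, and prime-order subgroups are automatically nontrivial and proper, unless $G$ itself has prime order, in which case there are no vertices at all.
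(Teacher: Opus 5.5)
Your proposal is correct and uses the same two ingredients as the paper: a subgroup of a vertex inherits that vertex's neighbours, and two subgroups of prime order either coincide or intersect trivially. The paper applies these directly to an arbitrary path $H_0 \text{---} \cdots \text{---} H_k$, choosing minimal subgroups $M_1 \leq H_1$ and $M_{k-1} \leq H_{k-1}$ to obtain the path $H_0 \text{---} M_1 \text{---} M_{k-1} \text{---} H_k$ of length at most three. Your shortest-path contradiction framing is therefore unnecessary but harmless, as are the subgroups $A_1, A_2, K_1, K_2$, which you introduce but never use.
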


\begin{proof}
Suppose $H = H_0 \text{---} H_1 \text{---} \ldots \text{---} H_k = H'$ 
is a path of length $k$ in $\Delta_G ^c$ joining the vertices $H$ and $H'$. 
Let $M_i \leq H_i$ be a minimal subgroup. 
Then $H \text{---} M_1 \text{---} M_{k-1} \text{---} H'$ 
is a path of length at most $3$ 
in $\Delta_G ^c$ joining $H$ and $H'$. 
\end{proof}

\begin{propn} \label{CompProp}
The graph $\Delta_G ^c$ is nonempty complete iff there exist primes 
$p$ and $q$ (not necessarily distinct) such that $\lvert G \rvert = pq$. 
\end{propn}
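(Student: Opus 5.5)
We prove the two directions separately. Throughout, "nonempty" means that $G$ has at least one nontrivial proper subgroup, so that $G$ is not trivial and not of prime order.

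For the reverse direction, suppose $\lvert G \rvert = pq$ with $p,q$ primes, not necessarily distinct. By Lagrange every nontrivial proper subgroup has prime order, so such a group has at least one of them, and the graph is nonempty. Two distinct subgroups of prime order meet in a subgroup whose order divides a prime and is not the whole of either subgroup, so the intersection is trivial. Hence every pair of distinct vertices is adjacent, and $\Delta_G ^c$ is complete.

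For the forward direction, suppose $\Delta_G ^c$ is nonempty and complete, meaning any two distinct nontrivial proper subgroups intersect trivially. The key step is to show that every nontrivial proper subgroup $H$ has prime order. Suppose instead that $H$ is a nontrivial proper subgroup that does not have prime order. Then $H$ has a nontrivial proper subgroup $K$: take a subgroup of prime order $r$ dividing $\lvert H \rvert$, which exists by Cauchy and is proper in $H$ because $\lvert H \rvert \neq r$. Now $K$ and $H$ are distinct nontrivial proper subgroups of $G$ with $K \cap H = K \neq 1$, contradicting completeness. So every nontrivial proper subgroup of $G$ has prime order, i.e. every nontrivial proper subgroup is minimal.

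It remains to deduce $\lvert G \rvert = pq$. Let $p$ be a prime dividing $\lvert G \rvert$; since $G$ is nontrivial, such a prime exists.
\begin{itemize}
\item If $\lvert G \rvert$ is a power of $p$, say $p^n$, then $G$ has subgroups of every order $p^k$ with $k \le n$. Proper subgroups of order $p^k$ with $2 \le k \le n-1$ are excluded by the previous step, so $n \le 2$. Since $G$ does not have prime order, $n = 2$.
\item Otherwise let $q \neq p$ be another prime divisor. A Sylow $p$-subgroup $P$ is proper, so it has prime order, i.e. $\lvert P \rvert = p$. Likewise every Sylow subgroup has prime order, so $\lvert G \rvert$ is squarefree. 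Suppose $\lvert G \rvert$ had at least three prime factors. Groups of squarefree order are solvable (by Burnside's transfer theorem they are metacyclic), so $G$ has a Hall subgroup of order $pq$ for some pair of its primes. That Hall subgroup is proper and does not have prime order, a contradiction. Hence $\lvert G \rvert = pq$.
\end{itemize}

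The only non-elementary input is the existence of a subgroup of composite proper order in the case of at least three primes. I expect this to be the main obstacle, since it relies on the solvability of groups of squarefree order together with Hall's theorem. An alternative route is to take a maximal subgroup $M$, note that it has prime order, and argue from there, but the Hall-subgroup argument is the cleaner way to finish.
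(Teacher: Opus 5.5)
Your proof is correct, but it takes a different route from the paper. The paper's proof is essentially a citation: it treats the one-vertex case separately (a group with a unique nontrivial proper subgroup is $C_{p^2}$) and otherwise invokes Lemma 2.5 and Proposition 2.6 of \cite{VisVad}, which identify the groups with complete co-intersection graph up to isomorphism as $C_p \times C_p$, $C_p \times C_q$ and $C_p \rtimes C_q$.

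You instead argue directly. Completeness forces every nontrivial proper subgroup to have prime order, since a subgroup and one of its own prime-order subgroups would be non-adjacent. Sylow theory then gives $\lvert G \rvert \in \lbrace p^2, pq \rbrace$, and $C_{p^2}$ comes out of your $p$-group case uniformly rather than as a special case. The paper's route gives the finer isomorphism classification, which it reuses in Example \ref{CoIntFinGenEx}, at the price of an external dependence. Yours is self-contained and proves exactly the stated order condition.

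The step you flag as the main obstacle needs neither Burnside's transfer theorem nor Hall's theorem. Since every proper subgroup has prime order, a Sylow subgroup $P$ of order $p$ satisfies $N_G(P) \in \lbrace P, G \rbrace$. If the Sylow subgroups for two distinct primes $p,q$ were both self-normalizing, they would contribute $\lvert G \rvert (1-1/p) + \lvert G \rvert (1-1/q) > \lvert G \rvert$ nonidentity elements, which is impossible. So some Sylow subgroup $P$ is normal. Then $PQ$, with $Q$ a Sylow subgroup for another prime, is a proper subgroup of composite order whenever three primes divide $\lvert G \rvert$.

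A minor point: in the reverse direction, the existence of a vertex comes from Cauchy rather than Lagrange.
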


\begin{proof}
If $G$ has a unique nontrivial proper subgroup, 
then $G$ is cyclic of order $p^2$. 
Otherwise, the conclusion is immediate from 
Lemma 2.5 and Proposition 2.6 of \cite{VisVad}. 
\end{proof}

\begin{thm}[\cite{BurCov} Theorem 2.2]
Let $T$ be a finite simple group and let $M$ be a maximal subgroup of $M$. 
Then $\lvert T \rvert$ and $\lvert M \rvert$ have the same set of prime 
divisors iff $(T,M)$ is as in Table \ref{tab:BadPairs}. 
\end{thm}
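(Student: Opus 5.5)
The statement is quoted from \cite{BurCov}, so the intended proof is a reduction to the literature; I would organise an independent verification as follows. The starting point is the theorem of Liebeck, Praeger and Saxl \cite{LiePraSax}. It classifies all pairs $(T,H)$ with $H<T$ a proper subgroup (not necessarily maximal) such that $\pi(H)=\pi(T)$, where $\pi(X)$ denotes the set of prime divisors of $\lvert X\rvert$. Since $\pi(H)\subseteq\pi(M)\subseteq\pi(T)$ whenever $H\leq M<T$, any maximal $M$ with $\pi(M)=\pi(T)$ contains a subgroup from their list (for instance $M$ itself). So the task is to run through that list, decide which entries are maximal or lie in maximal overgroups with the same prime divisors, and record exactly those maximal pairs. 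This yields Table \ref{tab:BadPairs}.

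To check the list independently rather than just cite it, I would proceed by the classification of finite simple groups, with a different tool for each family.

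\textbf{Alternating groups.} For $T=A_n$, Bertrand's postulate gives a prime $p$ with $n/2<p\leq n$. Any subgroup of order divisible by $p$ is then transitive, or else fixes $n-p$ points and so contains $A_{n-k}$ acting on the remaining points, where $k=n-p<n/2$. In the intransitive case, requiring all primes up to $n$ forces $M=A_{n-k}\times A_k$-type subgroups with no prime in $(n-k,n]$, which gives the family $A_{n-k}\leq A_n$. In the transitive case, primitivity arguments together with known bounds on primitive groups containing $p$-cycles force $M$ to be one of a few small exceptions.

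\textbf{Groups of Lie type.} Here I would use Zsigmondy primitive prime divisors. If $T$ is of Lie type over $\mathbb{F}_q$ with $q=\ell^f$, let $e$ be the largest integer such that $q^e-1$ appears in the order formula for $\lvert T\rvert$. A primitive prime divisor of $\ell^{ef}-1$ then divides $\lvert M\rvert$. By the Guralnick--Penttila--Praeger--Saxl classification of subgroups of $\GL_n(q)$ of order divisible by such primes, this forces $M$ into a very short list of Aschbacher classes. Imposing that a second primitive prime divisor, for the next-largest exponent, also divides $\lvert M\rvert$ cuts the possibilities down to the symplectic and orthogonal families in the table. Typical entries are $\Omega_{2m}^-(q)\leq\Sp_{2m}(q)$ with $q$ even, $\Omega_{2m-1}(q)<P\Omega_{2m}^+(q)$, and $G_2(q)<\Omega_7(q)$. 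Finitely many small cases, where Zsigmondy's theorem fails (such as $2^6-1$ or $q^2-1$ with $q$ a Mersenne prime), are handled directly.

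\textbf{Sporadic groups.} These are handled by inspecting the ATLAS lists of maximal subgroups.

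The main obstacle is the Lie type case: controlling subgroups whose orders are divisible by two distinct primitive prime divisors across all Aschbacher classes, including the class $\mathcal{S}$ of almost simple subgroups. For that step I would rely directly on \cite{LiePraSax} (and the refinement in \cite{BurCov}) rather than reprove it. The maximality check against the list of Kleidman--Liebeck and Bray--Holt--Roney-Dougal, for the low-dimensional cases, then finishes the argument.
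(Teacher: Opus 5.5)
The paper gives no argument for this statement. It quotes Theorem 2.2 of \cite{BurCov}, which refines the list in \cite{LiePraSax}, and you too hand the decisive Lie-type step to those sources, so your reduction agrees with the paper in substance. The independent check you sketch around the citation, however, contains false claims.

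\textbf{Alternating groups.} A subgroup of order divisible by a prime $p>n/2$ need not be transitive or contain $A_p$ on the support of a $p$-cycle. The cyclic group generated by a $p$-cycle with $5\le p<n$ is a counterexample. You have to use maximality throughout:
\begin{itemize}
\item An element of order $p$ is a $p$-cycle, since $2p>n$.
\item An intransitive maximal $M=(S_k\times S_{n-k})\cap A_n$ has $\pi(M)=\pi(T)$ exactly when there is no prime in $(\max(k,n-k),n]$.
\item An imprimitive maximal $M=(S_a\wr S_b)\cap A_n$ has $a,b\le n/2$, so $p\nmid\lvert M\rvert$.
\item A primitive $M$ is ruled out by Jordan's theorem once you choose $p\in(n/2,n-3]$, which is possible for $n\geq 8$.
\end{itemize}
For $n\le 7$ the only primitive example is the transitive $A_5<A_6$, which is the image of a point stabiliser under an outer automorphism. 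So up to automorphisms nothing beyond row (a) occurs, rather than ``a few small exceptions''.

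\textbf{Lie-type groups.} Your ``typical entry'' $G_2(q)<\Omega_7(q)$ is not an entry at all. A primitive prime divisor $r$ of $q^4-1$ divides $\lvert\Omega_7(q)\rvert$. But $q$ has order $4$ modulo $r$, so $r$ divides neither $q^6-1$ nor $q^2-1$. Hence $r\nmid\lvert G_2(q)\rvert=q^6(q^6-1)(q^2-1)$, and asserting this pair contradicts the statement you are proving.

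There is also a coverage gap. The Guralnick--Penttila--Praeger--Saxl theorem needs a primitive prime divisor of $q^e-1$ with $e>d/2$, where $d$ is the module dimension, and Aschbacher classes are a classical-group notion. For ${}^3D_4$, $F_4$, ${}^2F_4$, $E_6$, ${}^2E_6$, $E_7$ and $E_8$ one has $e\le d/2$ even in the smallest module. So your sketch gives no handle on these families at all.

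Finally, the analysis of almost simple ($\mathcal{S}$-class) subgroups is exactly what you defer to \cite{LiePraSax}, and it produces many of the isolated entries, for example $U_3(5)\geq A_7$, $U_4(3)\geq L_3(4)$ and $U_6(2)\geq M_{22}$.

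So your proposal stands only as the citation, as in the paper. The alternating-group part can be repaired as above, but the Lie-type part genuinely rests on the literature, and the sketch as written should not be presented as a verification.
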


\begin{table}
    \centering
    \begin{tabular}{llll}
         & $T$ & \textbf{Type of} $M$ & \textbf{Conditions} \\
        (a) & $A_n$ & $(S_k \times S_{n-k}) \cap A_n$ & No primes in $[k+1,n]\cap \mathbb{N}$ \\
        (b) & $\Sp_{2m} (q)$ & $O^- _{2m} (q)$ & $m,q$ even \\
        (c) & $\Omega_{2m+1} (q)$ & $O^- _{2m} (q)$ & $m$ even, $q$ odd \\
        (d) & $\text{P} \Omega_{2m} ^+ (q)$ & $O _{2m-1} (q)$ & $m$ even, $q$ odd \\
        (e) & $\text{P} \Omega_{2m} ^+ (q)$ & $\Sp_{2m-2} (q)$ & $m,q$ even \\
        (f) & $\PSp_4 (q)$ & $\Sp_2 (q^2)$ & \\
        & $L_6(2)$  & $P_1 , P_5$ & \\
        & $U_3(3)$ & $L_2(7)$ &   \\
        & $U_3(5)$ & $A_7$ &   \\
        & $U_4(2)$ & $P_2 , \Sp_4 (2)$ &   \\
        & $U_4(3)$ & $L_3(4) , A_7$ &   \\
        & $U_5(2)$ & $L_2(11)$ &   \\
        & $U_6(2)$ & $M_{22}$ &   \\
        & $\PSp_4 (7)$ & $A_7$ &   \\
        & $\Sp_6 (2)$ & $O_6 ^+ (2)$ &   \\
        & $\Omega_8 ^+ (2)$ & $P_1,P_3,P_4,A_9$ &   \\
        & $G_2(3)$ & $L_2(13)$ &   \\
        & ${^2}F_4(2)'$ & $L_2(25)$ &   \\
        & $M_{11}$ & $L_2(11)$ &   \\
        & $M_{12}$ & $M_{11} , L_2(11)$ &   \\
        & $M_{24}$ & $M_{23}$ &   \\
        & $\text{HS}$ & $M_{22}$ &   \\
        & $\text{McL}$ & $M_{22}$ &   \\
        & $\Co_2$ & $M_{23}$ &   \\
        & $\Co_3$ & $M_{23}$ &   \\
    \end{tabular}
    \caption{Pairs $(T,M)$ with $T$ nonabelian finite simple; 
    $M <_{\max} T$, and $\lvert T \rvert$, $\lvert M \rvert$ having the same 
    sets of prime divisors. }
    \label{tab:BadPairs}
\end{table}

\section{Symplectic and orthogonal groups} \label{ClassicalSect}

We refer to Subsection 1.6.4 of \cite{BrHoRD} for
the orders of the finite simple orthogonal and symplectic groups
and their associated covers and overgroups.

\begin{rmrk} \label{SmallRkIsoRmrk}
In studying the simple groups $T = \text{P} \Omega^{\epsilon} _n (q)$
and $T = \PSp_n(q)$ we shall assume
$n \geq 7$ and $n \geq 4$, respectively,
since for smaller values of $n$ the corresponding groups
either fail to be simple, or are isomorphic to simple groups
from other families (see Proposition 1.10.1 of \cite{BrHoRD}).
\end{rmrk}

It should be noted that the data in Table \ref{tab:BadPairs} does not 
give us the exact shape of the subgroup $M <_{\max} T$ in the cases 
(b)-(f). The tables in Section 2.2 of \cite{BrHoRD} detail the exact 
shape of the corresponding subgroup in the (generally quasisimple) 
cover of $T$, from which the following are immediate. 

\begin{lem}
    Let $(T,M)$ be one of the pairs in lines (b)-(f) of table \ref{tab:BadPairs}. 
    Then $M$ is a central quotient of the following group $\tilde{M}$. 
    \begin{itemize}
        \item[(b)] $\tilde{M} = O^- _{2m} (q)$; 
        \item[(c)] $\tilde{M} = \Omega^- _{2m} (q).2$; 
        \item[(d)] $\tilde{M} = \Omega_{2m-1} (q).2$; 
        \item[(e)] $\tilde{M} = \Sp_{2m-2} (q)$; 
        \item[(f)] $\tilde{M} = \Sp_2 (q^2).2$.
    \end{itemize}
\end{lem}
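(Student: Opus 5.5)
This is a table look-up, so I will not argue anything structurally new. For each line (b)--(f) I will take $T$ to be the simple group $\bar{\Omega}/Z$, where $\bar{\Omega}$ is the quasisimple cover used in \cite{BrHoRD} (e.g. $\Sp_{2m}(q)$, $\Omega_{2m+1}(q)$, $\Omega^+_{2m}(q)$) and $Z = Z(\bar\Omega)$. I will then read off from the relevant table in Section 2.2 of \cite{BrHoRD} the structure of the preimage $\tilde M$ in $\bar\Omega$ of the maximal subgroup $M$ of the stated type. Since $M = \tilde M Z / Z \cong \tilde M/(\tilde M\cap Z)$ and $\tilde M \cap Z$ is central in $\tilde M$, it suffices to identify $\tilde M$ up to isomorphism in each case.

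Case by case, I expect the following.

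\textbf{(b)} With $q$ even, $\Sp_{2m}(q)$ is simple with trivial centre. The $\mathcal{C}_8$ subgroup $O^-_{2m}(q)$, stabilising a quadratic form polarising to the symplectic form, is the full isometry group of that form. So $\tilde M = O^-_{2m}(q)$ and $M\cong \tilde M$.

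\textbf{(c)} Here $q$ is odd. The $\mathcal{C}_1$ stabiliser of a nondegenerate $(-)$-type hyperplane in $\Omega_{2m+1}(q)$ is the group of determinant-one, spinor-norm-one elements of $O_{2m}^-(q)\times O_1(q)$. Projecting to the first factor, with the $O_1$ coordinate forced by the determinant, gives $\Omega^-_{2m}(q).2$. I will check in \cite{BrHoRD} that the index-two extension is by an element of $\SO^-_{2m}$ or a reflection-times-$(-1)$, whichever is relevant; only the shape $\Omega^-_{2m}(q).2$ is needed.

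\textbf{(d)} Similarly, the nondegenerate-hyperplane stabiliser in $\Omega^+_{2m}(q)$ is $(\Omega_{2m-1}(q)\times 2).2$ or $\Omega_{2m-1}(q).2$. In the former case the central $2$ is the $-1$ on the $1$-space, and it becomes central after projection modulo $Z(\Omega^+_{2m}(q))$. I will phrase the identification as ``central quotient of $\Omega_{2m-1}(q).2$'' to absorb this, noting that $-I_{2m}\in Z$ when $m$ is even and the discriminant is square.

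\textbf{(e)} With $q$ even, the stabiliser of a nonsingular vector in $\Omega^+_{2m}(q)$ is $\Sp_{2m-2}(q)\times 1$, via the isomorphism $O_{2m-1}(q)\cong \Sp_{2m-2}(q)$ in characteristic $2$.

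\textbf{(f)} In $\Sp_4(q)$ the $\mathcal{C}_3$ field-extension subgroup is $\Sp_2(q^2).2$, and we quotient by $\pm I$.

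The main obstacle is bookkeeping rather than mathematics. Conventions for $O$ versus $\Omega$, and for the centre, vary between \cite{BurCov} and \cite{BrHoRD}, and one must make sure the $\tilde M$ in \cite{BrHoRD} is the preimage in the cover rather than in a larger overgroup (e.g. $\mathrm{CO}$ or $\mathrm{GO}$). To handle this, I will consult the $\mathcal{C}_1$, $\mathcal{C}_3$ and $\mathcal{C}_8$ columns for the simple group itself. The conclusion ``central quotient'' is robust: even if \cite{BrHoRD} lists a slightly different decoration such as $(\cdots\times 2)$, the extra factor is central and can be quotiented off. That is the only detail I would check by hand in cases (c) and (d).
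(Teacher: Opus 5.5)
Your route is the paper's route. The paper reads the preimage $\tilde M$ off the tables in Section 2.2 of \cite{BrHoRD} and uses $M\cong \tilde M/(\tilde M\cap Z)$. Your expected answers in (b), (c), (e), (f) are right. In (c), the outer element is a determinant $-1$ element of $O^-_{2m}(q)$ paired with $-1$ on the $1$-space, because the determinant-one part is forced into $\Omega^-_{2m}(q)$.

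The hand-check you sketch for (d), however, is wrong in two places.

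First, the shape $(\Omega_{2m-1}(q)\times 2).2$ cannot occur. The stabiliser of a nondegenerate $1$-space $\langle v\rangle$ in $O^+_{2m}(q)$ is $O_{2m-1}(q)\times O_1(q)$, of order $8\lvert\Omega_{2m-1}(q)\rvert$. It maps onto $O^+_{2m}(q)/\Omega^+_{2m}(q)\cong 2^2$, since already $O(v^\perp)$ contains reflections of both spinor norms. Hence $\lvert\tilde M\rvert = 2\lvert \Omega_{2m-1}(q)\rvert$.

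Second, the element acting as $-1$ on $\langle v\rangle$ and trivially on $v^\perp$ is the reflection $r_v$. It has determinant $-1$, so it is not in $\Omega^+_{2m}(q)$ at all. The correct central involution is $-I_{2m}$:
\begin{itemize}
\item[(i)] as $m$ is even, $q^m\equiv 1 \pmod 4$, so $-I_{2m}\in\Omega^+_{2m}(q)$;
\item[(ii)] it stabilises $\langle v\rangle$ and lies outside $\Omega(v^\perp)\times 1$;
\item[(iii)] hence $\tilde M=\Omega_{2m-1}(q)\times\langle -I_{2m}\rangle$, and so $M\cong \Omega_{2m-1}(q)$, which is still a central quotient of $\Omega_{2m-1}(q).2$ as the lemma claims.
\end{itemize}

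This also shows that your ``robustness'' remark needs correcting. Passing from $\tilde M$ to $M$ kills only $\tilde M\cap Z(\bar\Omega)$. An extra factor that is merely central in $\tilde M$, or that ``becomes central after projection'', is not quotiented off; it would survive in $M$ and could break the conclusion. Case (d) works only because the extra $2$ is $-I_{2m}\in Z(\Omega^+_{2m}(q))$. So when you check against \cite{BrHoRD}, the thing to verify is that any extra factor lies in the centre of the quasisimple cover, not just that it is central in $\tilde M$.
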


For $q$ a prime power and $e \geq 2$, a prime divisor $r$ of $q^e - 1$
is \emph{primitive} if $r$ does not divide $q^i - 1$ for $1 \leq i \leq e-1$.

\begin{thm}[Zsigmondy]
For all $e \geq 3$ and all prime powers $q$ with $(q,e) \neq (2,6)$,
$q^e - 1$ has a primitive prime divisor.
\end{thm}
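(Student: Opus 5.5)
The statement is the classical theorem of Zsigmondy, and I would prove it by the standard cyclotomic-polynomial argument. In fact it holds for every integer $q \geq 2$, not just prime powers. Write $q^e - 1 = \prod_{d \mid e} \Phi_d(q)$, where $\Phi_d$ is the $d$-th cyclotomic polynomial. A prime $r$ is a primitive prime divisor of $q^e-1$ exactly when the multiplicative order of $q$ modulo $r$ equals $e$. Every such $r$ divides $\Phi_e(q)$. So it suffices to show that $\Phi_e(q)$ has a prime divisor $r$ with $\operatorname{ord}_r(q) = e$.

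The first step is the arithmetic lemma. Let $r$ be a prime dividing $\Phi_e(q)$, and put $d = \operatorname{ord}_r(q)$, so that $r \nmid q$ and $d \mid e$. If $d < e$, then $e = d r^k$ with $k \geq 1$. To see this, write $e = d m r^k$ with $r \nmid m$. Then $\Phi_e(q)$ divides $(q^e-1)/(q^{e/r}-1)$ when $k \geq 1$. Moreover $q^{dm}-1 = \prod$ over divisors of $dm$, and this product is separable modulo $r$ when $r \nmid dm$. These two facts rule out $m > 1$. Since $d \mid r-1$, it follows that $r$ is the largest prime divisor of $e$. Next, a lifting-the-exponent computation shows that $v_r\bigl((q^{e}-1)/(q^{e/r}-1)\bigr) = 1$ except when $r = 2$ and $e = 2$. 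Since $e \geq 3$, this exception does not occur. Hence, if $q^e-1$ has no primitive prime divisor, then $\Phi_e(q)$ is either $1$ or $p$, where $p$ is the largest prime factor of $e$. In either case $\Phi_e(q) \leq p$.

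The second step is the size estimate. We have $\Phi_e(q) = \prod_{\zeta} \lvert q - \zeta \rvert$, the product running over the $\varphi(e)$ primitive $e$-th roots of unity $\zeta$. For $e \geq 3$ no such $\zeta$ is real, so $\varphi(e) \geq 2$ and each factor satisfies $\lvert q-\zeta\rvert > q-1$. If $q \geq 3$, each factor exceeds $2$, so
\[
\Phi_e(q) > 2^{\varphi(e)} \geq \varphi(e)+1 \geq p,
\]
using $p - 1 \leq \varphi(e)$. This contradicts the first step.

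The main obstacle is $q = 2$, where the factors $\lvert 2-\zeta\rvert$ can be close to $1$. Here I would use $\lvert 2-\zeta\rvert^2 = 5 - 4\cos\theta$ with $\zeta = e^{i\theta}$. Pairing conjugates, the roots with $\lvert\theta\rvert \geq \pi/3$ contribute factors at least $\sqrt{3}$. Only roots with $\lvert \theta \rvert < \pi/3$ can contribute factors near $1$, and these make up at most roughly a third of the $\varphi(e)$ roots.

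This gives $\Phi_e(2) \geq c^{\varphi(e)}$ for a constant $c > 1$. That bound beats $p \leq \varphi(e)+1$ once $\varphi(e)$ is moderately large. The finitely many remaining small $e$ I would check directly; for instance $\Phi_3(2) = 7$, $\Phi_4(2) = 5$ and $\Phi_5(2) = 31$ all have primitive prime divisors. The only failure is $\Phi_6(2) = 3$, which is the largest prime dividing $6$. This is exactly the excluded case $(q,e) = (2,6)$.
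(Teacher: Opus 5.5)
The paper gives no proof of this statement; it quotes Zsigmondy's theorem as a classical result. Your cyclotomic structure is the standard one. Two parts are fine: the reduction to ``no primitive prime divisor implies $\Phi_e(q)\in\{1,p\}$'', and the case $q\geq 3$ via $\lvert q-\zeta\rvert>q-1\geq 2$.

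The genuine gap is at $q=2$, which you yourself flag as the main obstacle. You claim that the primitive $e$-th roots of unity with $\lvert\theta\rvert<\pi/3$ make up ``at most roughly a third'' of them. This is not proved, and taken literally it is false: for $e=12$ and $e=20$ exactly half of them lie in that arc. The true count is $\varphi(e)/3$ plus an error that inclusion--exclusion only bounds by about $2^{\omega(e)}$. So without further work you get neither a uniform constant $c>1$ nor an explicit threshold on $\varphi(e)$, and ``the finitely many remaining small $e$'' is not an identified list.

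M\"obius inversion fixes this and avoids equidistribution entirely:
\[
\Phi_e(2)=\prod_{d\mid e}(2^d-1)^{\mu(e/d)}=2^{\varphi(e)}\prod_{d\mid e}(1-2^{-d})^{\mu(e/d)}\geq 2^{\varphi(e)}\prod_{d\geq 1}(1-2^{-d})>2^{\varphi(e)-2}.
\]
The inequality holds because the factors with $\mu(e/d)=-1$ exceed $1$, and $\prod_{d\geq 1}(1-2^{-d})\approx 0.289$. Hence $\Phi_e(2)>\varphi(e)+1\geq p$ once $\varphi(e)\geq 6$. Since $\varphi(e)$ is even, what remains is $e\in\{3,4,5,6,8,10,12\}$, where $\Phi_e(2)=7,5,31,3,17,11,13$ respectively. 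Only $e=6$ fails.

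There is also a smaller point in your first step. The divisibility $\Phi_e(q)\mid(q^e-1)/(q^{e/r}-1)$ is what feeds your later $r$-adic valuation bound, but it does not by itself rule out $m>1$. What does is the congruence $\Phi_{nr^k}(x)\equiv\Phi_n(x)^{\varphi(r^k)}\pmod r$ for $r\nmid n$. This converts $r\mid\Phi_e(q)$ into $r\mid\Phi_{dm}(q)$. Together with $r\mid\Phi_d(q)$ and the separability of $x^{dm}-1$ modulo $r$, this forces $m=1$.
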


We follow the description of the conjugacy classes and centralizers
of elements of prime order in finite simple symplectic and orthogonal
groups $G = \PSp_n(q)$ or $\text{P} \Omega_n ^{\epsilon} (q)$ from Subsection 2.2 of \cite{BurCov}.
To wit, for each prime divisor $r$ of $\lvert G \rvert$
such that $r \nmid q$, let $i \in \mathbb{N}$ be minimal
such that $r \mid (q^i - 1)$.
Then the conjugacy classes of elements of order $r$ in $G$
are parametrized by tuples $a_1 , \ldots , a_t$ of positive integers
satisfying $i \leq i (a_1 + \ldots a_t) \leq n$
(and additional conditions in some cases).
The only case that shall concern us is the following:

\begin{lem} \label{CentralizerOrders}
If $i > n/2$, then $G$ contains a unique conjugacy class of $r$-elements,
corresponding to the data $t=1$ and $(a_1) = (1)$.
In this case,
\begin{equation*}
\lvert C_G (x) \rvert
= \left\{ \begin{array}{ll}
 2^{-a} \lvert \Sp_l (q) \rvert  \lvert \GU_1 (q^{i/2}) \rvert & G = \PSp_n(q), i \text{ even} \\
 2^{-a} \lvert \Sp_l (q) \rvert \lvert \GL_1 (q^i) \rvert & G = \PSp_n(q), i \text{ odd} \\
 2^{-a} \lvert \SO_l ^{\epsilon'} (q) \rvert \lvert \GU_1 (q^{i/2}) \rvert & G = \text{P} \Omega^{\epsilon} (q), i \text{ even} \\
 2^{-a} \lvert \SO_l ^{\epsilon'} (q) \rvert \lvert \GL_1 (q^i) \rvert & G = \text{P} \Omega^{\epsilon} (q), i \text{ odd}
\end{array} \right.
\end{equation*}
where $x \in G$ is an element of order $r$, $l=n-i$ and
$a = (1-(-1)^q)/2$.
\end{lem}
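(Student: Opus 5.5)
This lemma is essentially a specialisation of the general description of semisimple classes of prime order in finite classical groups; I would deduce it from Burness–Covato's setup and the standard theory of semisimple centralizers. I would not derive the parametrisation from scratch.

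\textbf{Step 1: uniqueness of the class.} Let $x$ have order $r$ with $r \nmid q$, and let $i$ be the multiplicative order of $q$ modulo $r$. Work first in the natural module $V$ of dimension $n$ for the isometry group, lifting $x$ to a semisimple element $\hat x$ of order $r$. The natural module decomposes as $V = [V,\hat x] \perp C_V(\hat x)$. The minimal polynomial of a primitive $r$-th root of unity over $\mathbb{F}_q$ has degree $i$, so $[V,\hat x]$ is a sum of irreducible $\langle \hat x\rangle$-modules, each of dimension $i$.

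\begin{itemize}
\item When $i$ is even, the eigenvalues come in $\lambda \leftrightarrow \lambda^{-1}$ pairs inside a single orbit. Each irreducible summand is then a nondegenerate space of dimension $i$.
\item When $i$ is odd, $\lambda^{-1}$ lies in a different Frobenius orbit. Irreducibles then pair into totally singular dual pairs of dimension $2i$.
\end{itemize}

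Since $i > n/2$, we have $2i > n$, so the odd case is impossible unless the parametrisation is read with the convention $i(a_1+\dots+a_t)$ counting these $2i$-dimensional blocks. In either case, only one block fits, giving $t = 1$ and $a_1 = 1$. The different eigenvalue orbits in this block are Galois-conjugate, i.e.\ powers of one another. They therefore determine the same cyclic subgroup, and the class of $\hat x$ is determined up to the isometry type of the complement $C_V(\hat x)$, which has dimension $l = n - i$.

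Next I would invoke Witt's theorem: the isometry type of the $i$-block is forced by its discriminant (a $\GU_1$-type torus), so the complement has a uniquely determined type $\epsilon'$. Hence all such $\hat x$ are conjugate in the full isometry group. Descending to the simple group costs at most a splitting of the class, which I will exclude in Step 3.

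\textbf{Step 2: centralizer.} In the isometry group, $C(\hat x) = C_{[V,\hat x]}(\hat x) \times I(C_V(\hat x))$. The first factor is the centralizer of an irreducible (or dual-pair) semisimple element, namely $\GU_1(q^{i/2})$ in the even case and $\GL_1(q^i)$ in the odd case. The second factor is $\Sp_l(q)$ or $O^{\epsilon'}_l(q)$. Then I pass to $G$:

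\begin{itemize}
\item intersect with $\Omega$, or with $\SO$ at the determinant/spinor level;
\item quotient by scalars $\pm 1$;
\item note that $x$ and $-x$ are not conjugate, since $r$ is odd.
\end{itemize}

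These index computations produce the factors $\lvert \SO_l^{\epsilon'}(q)\rvert$ and the power $2^{-a}$, with $a = 0$ for $q$ even, where all the indices are trivial.

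\textbf{Step 3: class splitting (main obstacle).} The main obstacle is checking that the passage to $\Omega$ and $\text{P}\Omega$ neither fuses nor splits classes, and that the $2$-power indices combine exactly to $2^{-a}$. I would handle this by showing that $C(\hat x)$ meets every coset of $\Omega$ in the conformal isometry group, which forces the class not to split. Since $l \ge 1$ when $i < n$, the factor $I(C_V(\hat x))$ contains reflections of both spinor norms, which gives this. In the boundary case $l = 0$, I would instead check it directly using the torus. For the numerical form, I would then simply cite \cite[Prop.~3.5.4, 3.5.8]{BurCov}, i.e.\ Burness–Giudici's tables, which is presumably what the paper does.
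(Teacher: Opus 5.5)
The paper does not prove this lemma; it reads it off \S2.2 of \cite{BurCov}. Your natural-module approach is the right way to derive it, but Step~1 has a genuine gap. From ``the possible eigenvalue orbits are powers of one another, hence give the same cyclic subgroup'' you conclude that all such $\hat x$ are conjugate. That does not follow, and it is false. If $k \notin \langle q\rangle \le \mathbb{F}_r^\times$, then $\hat x$ and $\hat x^k$ have different characteristic polynomials, so they are not conjugate even in $\GL_n(\overline{\mathbb{F}}_q)$. (Nor are these orbits Galois conjugate over $\mathbb{F}_q$: Frobenius moves $\lambda$ only within its own orbit.) Since $i>n/2$ allows exactly one $i$-dimensional block, there are precisely $(r-1)/i$ classes of elements of order $r$ in the isometry group, and power maps permute them transitively. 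For instance, $\Sp_4(4)$ with $r=17$ (so $i=4$) has four classes of elements of order $17$, and $\Sp_8(2)$ with $r=17$ has two. What your argument actually proves is uniqueness up to taking powers: a single class of subgroups of order $r$, with all $r$-elements having centralizers of the same order. You should state and prove that, not conjugacy of elements.

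Second, you assert the $2$-power bookkeeping in Steps~2--3 rather than carry it out, and done carefully it does not always give $2^{-a}$. For $q$ odd and $G=\text{P}\Omega^\epsilon_n(q)$ one gets $\lvert C_G(x)\rvert=\lvert C_{\SO}(\hat x)\rvert/(2\lvert Z(\Omega^\epsilon_n(q))\rvert)$. One factor $2$ comes from $C_{\SO}(\hat x)$ containing elements of non-square spinor norm, and a second arises when $n$ is even and $-1\in\Omega^\epsilon_n(q)$. Take $\text{P}\Omega_8^+(3)$ with $r=7$, so $i=6$, $l=2$, $\epsilon'=-$:
\begin{itemize}
\item $\lvert C_{\SO}(\hat x)\rvert=\lvert\SO_2^-(3)\rvert\cdot\lvert\GU_1(27)\rvert=4\cdot 28$;
\item $\lvert C_{\Omega}(\hat x)\rvert=56$, since $\SO_2^-(3)\cong C_4\not\le\Omega$;
\item $\lvert C_G(x)\rvert=28$, since $-1\in\Omega_8^+(3)$.
\end{itemize}
The displayed formula instead gives $56$. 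So the numerical part cannot be obtained verbatim, by computation or by citation. In such cases the displayed value is only an upper bound. That is harmless for the lower bounds in Theorem~\ref{GenericPairsThm}, case (d), but it is not what you claim to derive.

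There are also two smaller problems in Step~3. For $l=1$, the line $C_V(\hat x)$ supplies reflections of only one spinor norm. You need the torus $\GU_1(q^{i/2})$, which does contain elements of non-square spinor norm, to reach the other coset. For $l=0$ in the orthogonal case, the centralizer of $\hat x$ in the full orthogonal group is just the torus. The torus lies in the index-$2$ subgroup $\SO$ (or $\Omega$ when $q$ is even), while the Frobenius element normalising it does not. So the class genuinely splits, since $\hat x$ and $\hat x^q$ stop being conjugate, and you cannot ``exclude'' this. Again the statement survives only up to powers.
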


Note that $\lvert \GL_1 (q^i) \rvert = q^i - 1$
and $\lvert \GU_1 (q^{i/2}) \rvert = q^{i/2} + 1$.
In fact, in all cases relevant to us, 
we shall have either that $l=1$ (only for $G$ orthogonal)
or $l=2$.
Note that $\SO_1 ^{\epsilon'} (q)$ is trivial;
$\SO_2 ^{\epsilon'} (q) \cong C_{q-\epsilon' }$
and $\Sp_2 (q) \cong \SL_2 (q)$.

\begin{thm} \label{GenericPairsThm}
For all pairs $(T,M)$ in rows (b)-(f) of Table \ref{tab:BadPairs},
there exists a prime $r$ such that $r \mid \lvert T \rvert$ and
$e_r (T) > 2 e_r (M)$. (with the possible exception of the pair
$(T,M) = (\Omega_8 ^+ (2), \Sp_6 (2))$ in row (e)).
\end{thm}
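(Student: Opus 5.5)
We choose $r$ to be a primitive prime divisor of $q^e-1$ with $e$ as large as possible subject to $r$ dividing both $\lvert T \rvert$ and $\lvert M \rvert$; such an $r$ exists because all primes dividing $\lvert T \rvert$ also divide $\lvert M \rvert$. Concretely, take $e = n$ for $T = \PSp_n(q)$ with $n = 2m$ in rows (b), (f) and $T = \text{P}\Omega^+_{2m}(q)$ in row (e), where the relevant factor is $q^{m}+1$. For $T = \Omega_{2m+1}(q)$ in row (c) take $e = 2m$, and for row (d) take $e = 2m-2$. Zsigmondy's theorem supplies $r$ except when $(q,e) = (2,6)$; this excludes only small cases such as $\Omega_8^+(2)$ with $\Sp_6(2)$. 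In every case $i = e > n/2$ computed in both $T$ and $M$, so Lemma \ref{CentralizerOrders} applies to $T$ and to the simple socle of $M$ (or to $\tilde{M}$). Each group then has a single class of subgroups of order $r$.

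Next we count. For a group $G$ with a unique class of elements of order $r$ (up to Galois conjugation), we have $e_r(G) = (r-1)\cdot\lvert G : N_G(R)\rvert$ with $R$ a Sylow $r$-subgroup, which here is cyclic. Equivalently, $e_r(G) = \lvert G\rvert \cdot (\text{number of classes})/\lvert C_G(x)\rvert$. We compute $e_r(T)/e_r(M) \ge \lvert T\rvert\,\lvert C_M(x)\rvert/(\lvert M\rvert\,\lvert C_T(x)\rvert)$, allowing for a bounded factor from the number of $\mathrm{Aut}$-fused classes. Using the centralizer orders, $\lvert C_T(x)\rvert$ and $\lvert C_M(x)\rvert$ are both essentially $q^{m}+1$ times a factor of size at most $\lvert \SL_2(q)\rvert$ or $q\pm1$, with $l \in\{1,2\}$. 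The ratio is therefore roughly the index $\lvert T:M\rvert$ divided by a polynomial of degree at most $3$ in $q$.

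The index $\lvert T:M\rvert$ is large. In row (b) it is $q^m(q^m-1)/2$, in row (c) it is $q^m(q^m-1)/2$, in row (d) it is $q^{m-1}(q^m+1)/2$-ish, in row (e) it is about $q^{2m-1}$, and in row (f) it is $q^2(q^2-1)/2$. With $m \ge 2$, and $m \ge 4$ in the orthogonal cases by Remark \ref{SmallRkIsoRmrk}, this comfortably exceeds $2$ times the centralizer ratio. The centralizer ratio is in fact often at most $1$, because $C_M(x)$ is the torus $q^{m}+1$ in both groups.

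The main obstacle will be the bookkeeping of central quotients and the outer factor $.2$ in $\tilde M$. These can change $e_r$ by small factors, and for $r=$ a Zsigmondy prime with $r \le 2e+1$ the number of classes may differ between $T$ and $M$. I would bound these crudely: the number of classes is at most $(r-1)/e$ in $T$ and at least that divided by $2$ in $M$. This gives an extra factor of at most $4$, absorbed by the index for all but the smallest $q, m$. Row (f) with $q=2,3$ and row (e) with $q=2, m=4$ would then be checked directly from known orders. $\Omega_8^+(2)$ stays excluded.
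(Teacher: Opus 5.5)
Your strategy is the paper's: a primitive prime divisor $r$ with $i>n/2$, centralizer orders from Lemma~\ref{CentralizerOrders}, and comparison with $\lvert T:M\rvert$. However, two steps fail as written.

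First, in row (e) your concrete choice $e=2m$, ``where the relevant factor is $q^m+1$'', is wrong. The order $\lvert \text{P}\Omega^+_{2m}(q)\rvert$ involves only $q^m-1$ and $q^{2j}-1$ for $j\le m-1$; the factor $q^m+1$ belongs to the minus type. Likewise $\lvert \Sp_{2m-2}(q)\rvert$ does not involve $q^m+1$. So a primitive prime divisor of $q^{2m}-1$ divides neither group, and $e_r(T)=0$. Your own ``largest $e$'' rule gives $e=2m-2$, which is what the paper uses. Lemma~\ref{CentralizerOrders} then applies with $l=2$ in $T$ and $l=0$ in $M$. This gives $\lvert C_T(x)\rvert=\lvert \SO_2^{\epsilon'}(q)\rvert\,(q^{m-1}+1)$ against $\lvert C_M(x)\rvert=q^{m-1}+1$, so the ratio is at least $q^{m-1}(q^m-1)/(q+1)>2$ for $m\ge 4$. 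Your explanation of the $\Omega_8^+(2)$ exception via $(q,e)=(2,6)$ is only consistent with $e=2m-2$; with $e=2m=8$, Zsigmondy's theorem would not fail.

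Second, your correction for several classes of elements of order $r$ runs in the wrong direction. You are right that there are in general $k_G=(r-1)/\lvert \mathrm{Aut}_G(\langle x\rangle)\rvert$ such classes, where $\mathrm{Aut}_G(\langle x\rangle)=N_G(\langle x\rangle)/C_G(x)$. The ratio is then
\[
\frac{e_r(T)}{e_r(M)}=\frac{k_T}{k_M}\cdot\frac{\lvert T\rvert\,\lvert C_M(x)\rvert}{\lvert M\rvert\,\lvert C_T(x)\rvert}.
\]
Bounding this from below needs a \emph{lower} bound on $k_T/k_M$. But ``$k_T\le (r-1)/e$ and $k_M\ge (r-1)/(2e)$'' only yields $k_T/k_M\le 2$. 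In fact $k_M\ge k_T$ always, since $\mathrm{Aut}_M(\langle x\rangle)\le\mathrm{Aut}_T(\langle x\rangle)$.

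A clean repair is the following. Since all subgroups of order $r$ are conjugate, $e_r(G)=(r-1)\,\lvert G:N_G(\langle x\rangle)\rvert$. Use $N_G(\langle x\rangle)$ here rather than your $N_G(R)$, in case $\lvert R\rvert>r$. Hence
\[
\frac{e_r(T)}{e_r(M)}=\lvert T:M\rvert\cdot\frac{\lvert N_M(\langle x\rangle)\rvert}{\lvert N_T(\langle x\rangle)\rvert}\ \ge\ \frac{\lvert T:M\rvert\,\lvert C_M(x)\rvert}{e\,\lvert C_T(x)\rvert}.
\]
The inequality holds because $\mathrm{Aut}_T(\langle x\rangle)$ must permute the eigenvalues $\lambda,\lambda^q,\dots,\lambda^{q^{e-1}}$ of $x$ on the natural module. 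It therefore embeds in $\langle q\rangle\le(\mathbb{Z}/r\mathbb{Z})^\times$, a group of order $e$. The loss factor $e\le 2m$ is absorbed by the index in every row; for instance the bound is still $9$ for $\PSp_4(3)$ and $15$ for $\Sp_8(2)$. With this and the corrected prime in row (e), no separate small-case checks are needed beyond the excluded $\Omega_8^+(2)$. The paper's own computation implicitly uses $k_T=k_M$, i.e.\ that $M$ already realizes the full automizer of order $e$.
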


\begin{proof}
In each case, we identify an odd prime $r$ which is a primitive prime divisor
or $q^i - 1$ for $i$ even and sufficiently large that
there is a unique conjugacy class of $r$-elements in both $T$ and $M$.
We then estimate:
\begin{equation*}
    \frac{e_r(T)}{e_r(M)} = \frac{\lvert \ccl_T (x) \rvert}{\lvert \ccl_M (x) \rvert}
  = \frac{\lvert T \rvert}{\lvert M \rvert} \cdot \frac{\lvert C_M (x) \rvert}{\lvert C_T (x) \rvert}
\end{equation*}
for any $r$-element $x \in M$.

For case (b), since $m \geq 2$ is even, there is a primitive prime divisor $r \mid (q^{2m}-1)$, so $i=2m$.
Then by Lemma \ref{CentralizerOrders},
$\lvert C_M (x) \rvert = \lvert C_T (x) \rvert = \lvert \GU_1 (q^m) \rvert$, so we have:
$$\frac{e_r(T)}{e_r(M)} \geq \frac{\lvert T \rvert}{\lvert M \rvert} = q^m (q^m - 1) > 2. $$
For case (c), there is again a primitive prime divisor $r \mid (q^{2m}-1)$, so $i=2m$.
By Lemma \ref{CentralizerOrders} we have:
$$\lvert C_T(x) \rvert \leq \frac{\lvert \SO_1 ^{\epsilon'} (q) \rvert}{2} \lvert \GU_1 (q^m) \rvert \leq \lvert \GU_1 (q^m) \rvert$$
and:
$$\lvert C_T(x) \rvert \geq \lvert C_M(x) \rvert \geq \lvert C_{\Omega_{2m} ^- (q)}(x) \rvert \geq \lvert \GU_1 (q^m) \rvert \leq \lvert C_T(x) \rvert$$
so:
$$\frac{e_r(T)}{e_r(M)} \geq \frac{\lvert T \rvert}{\lvert M \rvert}
\geq \frac{1}{2} q^m (q^m - 1) > 2. $$

For case (d), we take instead a primitive prime divisor $r$ of
$q^{2(m-1)} - 1$, so that $i=2m-2$.
For $T$, we are applying Lemma \ref{CentralizerOrders}
with $l=2$, so that:
$$\lvert C_T(x) \rvert = \frac{1}{2} \lvert \SO_2 ^{\epsilon '} (q) \rvert \lvert \GU_1 (q^{m-1}) \rvert$$
while for $M$ we apply Lemma \ref{CentralizerOrders} with $l=1$, so:
$$\lvert C_M (x) \rvert \geq \lvert C_{\Omega_{2m-1}(q)} (x) \rvert \geq \lvert \GU_1 (q^{m-1}) \rvert/2$$
thus:
$$\frac{e_r(T)}{e_r(M)} \geq \frac{q^{m-1} (q^m - 1)}{4 (q+1) } > 2$$
since by Remark \ref{SmallRkIsoRmrk} we may take $m \geq 4$.

Similarly for case (e), we take a primitive prime divisor $r$ of
$q^{2(m-1)} - 1$, so that $i=2m-2$, and apply
Lemma \ref{CentralizerOrders} with $l=2$ for $T$
and $l=0$ for $M$, so that:
$$\lvert C_T(x) \rvert = \lvert \SO_2 ^{\epsilon '} (q) \rvert \lvert \GU_1 (q^{m-1}) \rvert$$
and:
$$\lvert C_M(x) \rvert = \lvert \GU_1 (q^{m-1}) \rvert$$
so that:
$$\frac{e_r(T)}{e_r(M)} = \frac{1}{\lvert \SO_2 ^{\epsilon'} (q) \rvert}\frac{\lvert T \rvert}{\lvert M \rvert} \geq \frac{q^{m-1}(q^m - 1)}{q+1} > 2$$
since again $m \geq 4$.
Note that we must exclude $\Omega_4 ^+ (2)$ from this
analysis, since for the pair $(q,m)=(2,4)$,
$e=6$ fails the conclusion of Zsigmondy's Theorem.

Finally for case (f), let $r$ be a primitive prime divisor of $q^4 - 1$, then much as in case (b),
$C_M(x) = C_T(x)$, so:
$$\frac{e_r(T)}{e_r(M)} \geq \frac{\lvert T \rvert}{\lvert M \rvert}
\geq \frac{q^2 (q^2 - 1)}{2 (2,q-1)} > 2$$
as desired.
\end{proof}

\begin{rmrk} \label{OverlapRmrk}
\normalfont
\begin{itemize}
\item[(i)] From the proof of Theorem \ref{GenericPairsThm}, 
we have that for the pairs $(T,M)$ 
in rows (b) or (f), respectively, of Table \ref{tab:BadPairs}, 
any primitive prime divisor $r$ of $q^{2m} - 1$ or $q^4 - 1$, 
respectively, satisfies the conclusion of the Theorem. 

\item[(ii)] In particular, letting $T = \Sp_4 (q)$ for $q$ even, 
if $r$ is a primitive prime divisor of $q^4 - 1$, 
and $M_1 , M_2 < T$ are maximal subgroups of type (b) and (f), 
respectively, then: 
\begin{equation*}
e_r (T) > e_r(M_1) + e_r(M_2). 
\end{equation*}
\item[(iii)] The only groups $T$ which feature simultaneously
in rows (b)-(f) of Table \ref{tab:BadPairs} and in
the outstanding cases appearing below them are
$T = \PSp_4 (7)$ (case (f)) and $T =  \Omega ^+ _8 (2)$ (case (e)). 

\end{itemize}
\end{rmrk}

\section{Outstanding cases} 
\label{ExceptionSect}

As before, it is sufficient to verify that for all pairs $(T,M)$ in Table \ref{tab:BadPairs}, there exists a prime $p$ such that $e_p(T) >2e_p(M),$ which immediately implies that $T$ has Property (P). In these cases we observe something stronger, namely that  $$e_p(T) >2\vert M\vert.$$ We verify these using computations in GAP\cite{GAP} and Magma\cite{Magma}. In Table \ref{tab:outstanding}, for each $(T,M)$, we record: \begin{itemize} \item the relevant prime $p$, \item the number of elements of order $p$ in $T$ \item  $e_p(M),$ and \item $|M|.$ \end{itemize}

\begin{table}[ht]
\centering
\renewcommand{\arraystretch}{1.2}
\begin{tabular}{|c|c|c|c|c|c|}
\hline
$T$ & $M$ & prime $p$ & $e_p (T)$  & $e_p(M)$ & $\lvert M \rvert$ \\ \hline
$L_6(2)$ & $P_1,\,P_5$ & $p=31$:& $3\,901\,685\,760$ \;& $61\,931\,520$ & $319\,979\,520$ \\ \hline
$U_3(3)$ & $L_2(7)$ & $p=7$&$1\,728$  &  $48$&$168$ \\ \hline
$U_3(5)$ & $A_7$ & $p=7$ & $36\,000$ & $720$&$2\,520$ \\ \hline
$U_4(2)$ 
& \makecell{$P_2$ \\ $Sp_4(2)$}
& $p=5$
& $5\,184$
& \makecell{$384$ \\ $144$}
& \makecell{$960$ \\ $720$}
\\ \hline
$U_4(3)$ 
& \makecell{$L_3(4)$ \\ $A_7$}
& $p=7$
& $933\,120$
& \makecell{$5\,760$ \\ $720$}
& \makecell{$20\,160$\\ $2\,520$ }
\\ \hline

$U_5(2)$ & $L_2(11)$ & $p=11$&$2\,488\,320$ & $120$&$660$\\ \hline
$U_6(2)$ & $M_{22}$ & $p=11$&$1\,672\,151\,040$& $80\,640$&$443\,520$ \\ \hline
$PSp_4(7)$ & \makecell{$A_7$\\$PSp_2(49):2$} & $p=2$& $52\,675$ &\makecell{$350$ \\ $2\,450$}&\makecell{$2\,520$ \\ $117\,600$} \\ \hline
$Sp_6(2)$ & $O_6^+(2)$ & $p=7$& $207\,360$ &$5\,760$&$40\,320$ \\ \hline
$\Omega_8^+(2)$ & \makecell{$P_1,P_3,P_4$\\$A_9$\\$\Sp_6(2)$} & $p=7$& $24\,883\,200$ & \makecell{$368\,640$ \\ $25\,920$ \\ $207\,360$ }&\makecell{$1\,290\,240$ \\ $181\,440$ \\ $1\,451\,520$ }\\ \hline
$G_2(3)$ & $L_2(13)$ & $p=13$& $653\,184$& $168$&$1\,092$\\ \hline
${}^2F_4(2)'$ & $L_2(25)$ & $p=13$& $2\,764\,800$ &$3\,600$&$7\,800$ \\ \hline
$M_{11}$ & $L_2(11)$ & $p=11$& $1\,440$ &$660$& $120$ \\ \hline
$M_{12}$ & \makecell{$M_{11}$\\$L_2(11)$} & $p=11$& $17\,280$ & \makecell{$1\,440$ \\ $120$} &\makecell{$7\,920$\\$660$}\\ \hline
$M_{24}$ & $M_{23}$ & $p=23$& $21\,288\,960$ & $887\,040$ &$10\,200\,960$\\ \hline
$HS$ & $M_{22}$ & $p=11$& $8\,064\,000$& $80\,640$& $443\,520$ \\ \hline
$McL$ & $M_{22}$ & $p=11$& $163\,296\,000$& $80\,640$& $443\,520$ \\ \hline
$Co_2$ & $M_{23}$ & $p=23$& $3\,678\,732\,288\,000$ &$887\,040$ &$10\,200\,960$ \\ \hline
$Co_3$ & $M_{23}$ & $p=23$& $43\,110\,144\,000$ &$887\,040$ &$10\,200\,960$ \\ \hline
\end{tabular}
\caption{Outstanding cases}
\label{tab:outstanding}
\end{table}

\section{Recognizability by (co-)intersection graph} \label{RecogSect}

One may ask to what extent the finite group $G$ is determined by $\Delta_G ^c$; 
for which finite graphs $\Gamma$ there exists $G$ for which $\Gamma \cong \Delta_G ^c$, 
and when such $G$ exists, how many such $G$ exist (up to isomorphism), 
and what structural features they must share. 

\begin{defn}
Let $\Gamma$ be a finite graph.
The \emph{co-intersection genus} $\coIG(\Gamma)$ of $\Gamma$
is the set of all (isomorphism classes of) finite groups
$G$ satisfying $\Delta_G ^c \cong \Gamma$.
For $\mathcal{C}$ a class of finite groups,
and $G \in \mathcal{C}$ we say that $G$ is
\emph{recognizable in $\mathcal{C}$} by its co-intersection graph if
$\coIG(\Delta^c _G) \cap \mathcal{C} = \lbrace G \rbrace$.
We say that $G$ is
\emph{absolutely recognizable} by its co-intersection graph
if it is recognizable by its co-intersection graph in
the class of all finite groups.
\end{defn}

\begin{rmrk}
    \begin{itemize}
        \item[(i)] We concurrently have a notion of \emph{intersection genus} $\IG(\Gamma)$ 
        of the graph $\Gamma$, so that $\coIG(\Gamma) = \IG(\Gamma^c)$. 
        In particular, the group $G$ is recognizable by its intersection 
        graph (in a class $\mathcal{C}$) iff $G$ is recognizable by its co-intersection 
        graph. 
        
        \item[(ii)] The subgroup lattice $L(G)$ of the group $G$ determines $\Delta_G$, 
        hence the (co-)intersection genus of $G$ contains all groups $H$ for 
        which $L(G) \cong L(H)$. 
        
    \end{itemize}
\end{rmrk}

For many finite graphs $\Gamma$, $\IG(\Gamma)$ and $\coIG(\Gamma)$ are empty. 

\begin{ex}
By Lemma \ref{Diam3Lemma}, if $\Gamma$ has any connected component of diameter at least four, 
then there is no finite group $G$ for which $\Delta_G ^c \cong \Gamma$. 
\end{ex}

In fact, in an appropriate sense, for ``most'' finite graphs, 
$\IG(\Gamma)$ and $\coIG(\Gamma)$ are empty. 

\begin{propn} \label{RandomProp}
Let $\Gamma_n$ be an Erd\H{o}s-R\'{e}nyi random graph with $n$ vertices. 
Then as $n \rightarrow \infty$, 
the probability that $\Gamma_n$ is isomorphic to the intersection or co-intersection graph of 
any finite group tends to $0$. 
\end{propn}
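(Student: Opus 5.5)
Every intersection graph and every co-intersection graph of a finite group has a rigid local structure that a random graph almost surely lacks. Since $\coIG(\Gamma)=\IG(\Gamma^c)$ and the complement of a $G(n,1/2)$ random graph is again $G(n,1/2)$, it suffices to treat co-intersection graphs; the result for intersection graphs then follows. (If the model is $G(n,p)$ with fixed $p$, the complement is $G(n,1-p)$, and the argument below works for any fixed $p\in(0,1)$.)

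The key structural observation concerns minimal subgroups. Let $N$ be a nontrivial proper subgroup of $G$, and suppose some vertex $H$ is adjacent to $N$, so $H\cap N=1$. Then for every $K\le N$ with $K\neq 1$ we also have $H\cap K=1$. So the neighbourhood of $N$ in $\Delta_G^c$ is contained in the neighbourhood of each nontrivial subgroup of $N$. In particular, pick $K$ a minimal subgroup of prime order contained in $N$. Then either $K=N$, or $N(N)\subseteq N(K)$ with $K\ne N$ and $K,N$ nonadjacent. It follows that every vertex $N$ which is not itself of prime order has another vertex $K$ whose neighbourhood contains that of $N$.

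The plan is to exclude this via two standard facts about $G(n,1/2)$, each holding with probability tending to $1$.

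\textbf{Step 1.} No vertex's neighbourhood contains another vertex's neighbourhood. For fixed distinct $u,v$, the probability that $N(u)\setminus\{v\}\subseteq N(v)$ is $(3/4)^{n-2}$. A union bound over $n^2$ pairs gives a total of at most $n^2(3/4)^{n-2}\to 0$.

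\textbf{Step 2.} If Step 1 holds, every vertex of $\Delta_G^c\cong\Gamma_n$ must correspond to a subgroup of prime order. Note that the containment needed in Step 1 really only requires $N(N)\subseteq N(K)\cup\{K\}$, and $K\notin N(N)$ anyway. But if every nontrivial proper subgroup has prime order, then every pair of distinct vertices intersects trivially, so $\Delta_G^c$ is complete. The random graph is almost surely not complete, which gives a contradiction.

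One edge case must be handled: the case $K=N$ for every choice of $K$ is exactly the case where every subgroup has prime order, and this is already covered by Step 2. We also need $\Gamma_n$ to have at least two vertices with $N(N)\neq\emptyset$; isolated vertices also satisfy the containment trivially, and they are ruled out with high probability since the minimum degree is about $n/2$. So every configuration is excluded with probability at least $1-n^2(3/4)^{n-2}-2^{-\binom n2}\to 1$.

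I expect the main point of care to be making precise the neighbourhood-domination statement, including the handling of the non-edge between $K$ and $N$. Beyond that, the probabilistic estimate is routine.
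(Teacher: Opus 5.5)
Your argument is correct, and it takes a genuinely different route from the paper. The paper deduces the statement from Proposition \ref{AutomorphProp}, which says that $\Delta_G$ and $\Delta_G^c$ always admit a nontrivial graph automorphism. The proof there goes as follows. Either some automorphism of $G$ moves a subgroup, or every subgroup is characteristic. In the latter case $G$ is cyclic by Lemma \ref{CharCycLem}, which is proved via Dedekind's classification of Hamiltonian groups, and the cyclic case is handled by hand. The paper then combines this with the Erd\H{o}s--R\'enyi theorem that a random graph is asymmetric with high probability.

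You instead use a local obstruction. If $K<N$ with $\lvert K\rvert$ prime, then $N(N)\subseteq N(K)$ in $\Delta_G^c$. Hence $\Delta_G^c$ is either complete or has two distinct vertices with nested neighbourhoods, and you rule out both events by an elementary union bound.

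Your route is self-contained and needs neither Dedekind's theorem nor the asymmetry theorem. It also gives the explicit failure bound $n^2(3/4)^{n-2}+2^{-\binom n2}$, and, as you note, it works for every fixed edge probability, which is the regime implicit in the paper. The paper's route instead produces a global structural fact of independent interest: every such graph has a nontrivial automorphism. That fact is automatically invariant under complementation. It also transfers to any random graph model in which asymmetry holds with high probability.

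One minor remark: your separate discussion of isolated vertices is unnecessary. An isolated vertex $u$ satisfies $N(u)\subseteq N(v)$ for every $v\neq u$, so Step 1 already excludes it.
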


\begin{rmrk}
Recall that an Erd\H{o}s-R\'{e}nyi random graph is connected of diameter two with high 
probability. In other words, Proposition \ref{RandomProp} 
for co-intersection graphs cannot be deduced 
from Lemma \ref{Diam3Lemma}. 
\end{rmrk}

Recall that with probability $\rightarrow 1$ as $n \rightarrow \infty$, 
    the random graph $\Gamma_n$ admits no nontrivial graph automorphism 
    (see \cite{ErdRen}). Thus Proposition \ref{RandomProp} follows immediately from 
    the next Proposition. 

\begin{propn} \label{AutomorphProp}
Let $G$ be a finite group with at least two subgroups. 
Then $\Delta _G$ and $\Delta^c _G$ both have a nontrivial graph automorphism. 
\end{propn}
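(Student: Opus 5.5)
The idea is to exhibit a nontrivial permutation $\phi$ of the set of nontrivial proper subgroups of $G$ that preserves the relation ``$H \cap K = 1$''. Such a $\phi$ is simultaneously an automorphism of $\Delta_G$ and of $\Delta_G^c$, since the two graphs have the same vertex set and complementary edge sets. First a caveat about the hypothesis. If $G \cong C_p$, the graph has no vertices, and if $G \cong C_{p^2}$ it has exactly one vertex, so in neither case is there a nontrivial automorphism. I will therefore read the hypothesis as excluding these two groups, i.e.\ as requiring at least two nontrivial proper subgroups (as in the earlier Lemma on Property $(P)$), and say so explicitly. The proof then splits into cases according to the structure of $G$.

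\emph{Case 1: $G$ has a non-normal subgroup $H$.} Choose $g \in G$ with $H^g \neq H$, and let $\phi$ be conjugation by $g$, that is $\phi(K) = K^g$. This is a bijection on nontrivial proper subgroups, and $(K \cap L)^g = K^g \cap L^g$, so $\phi$ preserves trivial intersection. It is nontrivial because it moves $H$.

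\emph{Case 2: some proper subgroup $\langle g \rangle$ has order divisible by $p^2$ for a prime $p$.} Replacing $g$ by a suitable power, we may assume $g$ has order exactly $p^2$, and $\langle g \rangle$ is still proper. The key observation is that for any subgroup $L$, the intersection $K \cap L$ is nontrivial iff $L$ contains some subgroup of prime order lying in $K$. Now $\langle g \rangle$ and $\langle g^p \rangle$ are distinct vertices with the same unique subgroup of prime order, namely $\langle g^p \rangle$. So they have identical neighbourhoods among all other vertices, i.e.\ they are twins, and the transposition swapping them is a nontrivial automorphism of both graphs. If instead $\langle g \rangle = G$, then $G$ is cyclic of order $p^n$. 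For $n \geq 3$ we apply the same argument to $C_p < C_{p^2}$, both proper; the case $n = 2$ is excluded.

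\emph{Case 3: neither case applies.} Then every subgroup is normal, so $G$ is Dedekind, and every element has squarefree order. Since $Q_8$ contains elements of order $4$, $G$ has no $Q_8$ factor, hence $G$ is abelian of squarefree exponent, i.e.\ $G \cong \prod_p C_p^{n_p}$. Any group automorphism of $G$ acts on subgroups preserving intersections, so it suffices to find one that moves some subgroup.
\begin{itemize}
\item If some $n_p \geq 2$, take an element of $\GL_{n_p}(p)$ that moves a line in $C_p^{n_p}$, extended by the identity on the other factors. It moves the corresponding subgroup of order $p$, which is a nontrivial proper subgroup.
\item If all $n_p \leq 1$, then $G$ is cyclic of squarefree order. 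Excluding $G \cong C_p$, at least two primes $p \neq q$ divide $|G|$. Subgroups then correspond to subsets of the prime divisors, and trivial intersection corresponds to disjointness. Transposing $p$ and $q$ gives a permutation of subsets that preserves disjointness and sends the proper nontrivial subgroup $C_p$ to $C_q$.
\end{itemize}
This last map need not come from a group automorphism, but it is a graph automorphism all the same.

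The main obstacle is not any single computation but making sure the case split is exhaustive and that the small degenerate groups are handled correctly. The key tool is the twin criterion of Case 2, i.e.\ two vertices with the same set of prime-order subgroups are interchangeable. The step needing most care is Case 3, where one must check that the Dedekind classification together with squarefree exponent really forces $G$ to be elementary abelian in each Sylow subgroup.
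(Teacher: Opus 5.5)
Your proof is correct. Your caveat about the hypothesis is also justified: read literally, the Proposition fails for $C_p$ and $C_{p^2}$. The paper's proof silently adopts your reading when it concludes $\lvert G \rvert = p_1 p_2^2$ in its final step.

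Your route differs from the paper's.

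\textbf{The paper's route.} It splits according to whether every subgroup of $G$ is characteristic. If not, a group automorphism moves a vertex. If so, a separate lemma forces $G$ to be cyclic; that lemma uses Dedekind's theorem plus an explicit shear automorphism of $\mathbb{Z}/p^k\mathbb{Z} \times \mathbb{Z}/p^l\mathbb{Z}$. The cyclic case is then handled by a three-way analysis of the exponents in $\lvert G \rvert$: two equal exponents, some exponent $\geq 3$, or $\lvert G \rvert = p_1 p_2^2$.

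\textbf{Your route.} The twin observation is that a vertex's neighbourhood depends only on which minimal subgroups it contains. This is a uniform device that disposes at once of every group with a proper cyclic subgroup of order $p^2$. In fact the paper's second and third cyclic subcases are instances of it: the subgroups of orders $\lvert G \rvert/p_i$ and $\lvert G \rvert/p_i^2$ are twins, and so are those of orders $p_2$ and $p_2^2$. As a result, you need ``all subgroups characteristic implies cyclic'' only for abelian groups of squarefree exponent. There it is an elementary fact about $\GL_n(p)$ moving lines, and you never need the shear automorphism or most of the exponent case analysis. Both arguments still rely on Dedekind's theorem to get commutativity.

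\textbf{What the paper gains in return.} It obtains a sharper conclusion for noncyclic $G$: the graph automorphism is induced by a group automorphism. Your Case 2 transposition generally is not. The paper also isolates a standalone lemma of independent interest.

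\textbf{A small slip.} The sentence ``If instead $\langle g \rangle = G$, then $G$ is cyclic of order $p^n$'' is muddled. Under your Case 2 hypothesis $\langle g \rangle$ is already proper. Moreover, a cyclic $G$ of order divisible by $p^2$ need not be a $p$-group, e.g.\ $C_{p^2 q}$. What you actually need, to justify ``every element has squarefree order'' in Case 3, is this: a cyclic group of non-squarefree order other than $C_{p^2}$ has a proper cyclic subgroup of order $p^2$, and so falls under Case 2. That is immediate.
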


The proof of Proposition \ref{AutomorphProp} rests on the following Lemma, 
which must be well-known, but being unable to locate a published reference 
for the proof we include one here. 

\begin{lem} \label{CharCycLem}
    Let $G$ be a finite group. Suppose that every subgroup of $G$ is characteristic. 
    Then $G$ is cyclic. 
\end{lem}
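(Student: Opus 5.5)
The plan is to show first that $G$ is abelian, and then that every Sylow subgroup is cyclic; a finite abelian group with all Sylow subgroups cyclic is cyclic.

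For abelianness, the key observation is that every inner automorphism preserves every subgroup, since every subgroup is characteristic. So every subgroup of $G$ is normal, and $G$ is a Dedekind group. By the classical structure theorem (Dedekind--Baer), $G$ is then either abelian or isomorphic to $Q_8 \times E \times A$, where $E$ is elementary abelian $2$-group and $A$ is abelian of odd order. In the latter case we rule things out directly. $Q_8$ admits an automorphism permuting $i,j,k$ cyclically, which does not preserve $\langle i \rangle$. This automorphism extends by the identity on $E \times A$ to an automorphism of $G$ that moves the subgroup $\langle (i,1,1) \rangle$. That contradicts the hypothesis, so $G$ is abelian.

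If one prefers to avoid quoting Dedekind--Baer, an alternative is to work Sylow by Sylow. Each Sylow subgroup $P$ is characteristic in $G$, and each subgroup of $P$ is characteristic in $G$. Any automorphism of $P$ extends to $G$ once $G$ is known to be nilpotent (which holds since all subgroups are normal, so Sylow subgroups are normal). Here $G = P \times Q$ with $Q$ the Hall $p'$-part, and an automorphism of $P$ extends by the identity on $Q$. Hence every subgroup of $P$ is characteristic in $P$, and we reduce to $G$ a $p$-group.

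For the abelian step, we use the decomposition $G \cong \prod_p P_p$ and again reduce to an abelian $p$-group $P$ all of whose subgroups are characteristic. Suppose $P$ is not cyclic, say $P = C_{p^{a}} \times C_{p^{b}} \times R$ with $a \ge b \ge 1$, generated by $x$ and $y$ on the first two factors. Consider the endomorphism fixing $y$ and $R$ and sending $x \mapsto xy$. This is an automorphism: it is invertible via $x \mapsto xy^{-1}$, and it is well defined since the order of $xy$ divides $p^a$. It sends $\langle x \rangle$ to $\langle xy \rangle \neq \langle x \rangle$, because $y \notin \langle x \rangle$ by directness. This is a contradiction, so each $P_p$ is cyclic and $G$ is cyclic.

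The main obstacle is the nonabelian case. The cleanest route is to cite the Dedekind--Baer classification and exhibit the explicit automorphism of $Q_8$ as above. The only care needed there is checking that it extends to $G$, which is immediate from the direct product decomposition.
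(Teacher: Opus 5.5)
Your proposal is correct and follows essentially the same route as the paper. Dedekind's classification of Hamiltonian groups, together with the automorphism of $Q_8$ cyclically permuting $i,j,k$, gives abelianness; after the primary decomposition, your shear $x \mapsto xy$ on $C_{p^a} \times C_{p^b}$ (with $a \geq b$) is exactly the paper's automorphism, which fixes the smaller cyclic factor and adds its generator to the generator of the larger one. One small point: your optional Sylow-by-Sylow paragraph only reduces to $p$-groups and never shows that such a $p$-group is abelian, so it does not actually avoid Dedekind--Baer; your main argument does not depend on it.
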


\begin{proof}
First, such $G$ must be abelian. 
For every subgroup of $G$ is normal, 
so by Dedekind's classification of Hamiltonian groups \cite{Dedek}, 
if $G$ is nonabelian then $G$ has $Q_8$ as a direct factor. 
But $Q_8$ admits an automorphism which cyclically permutes its three $C_4$-subgroups. 

Similarly, by the primary decomposition of $G$, we reduce to the case for which $G$ 
is an abelian $p$-group. 
If $G$ is not cyclic, then $G$ has a direct factor of the form 
$(\mathbb{Z}/p^k \mathbb{Z}) \times (\mathbb{Z}/p^l \mathbb{Z})$ for some $1 \leq k \leq l$, 
in which the subgroup $\lbrace 0 \rbrace \times (\mathbb{Z}/p^l \mathbb{Z})$ 
is not preserved by the automorphism: 
$$(a + p^k \mathbb{Z},b + p^l \mathbb{Z}) \mapsto (a + b + p^k \mathbb{Z},b + p^l \mathbb{Z}).$$
\end{proof}

\begin{proof}[Proof of Proposition \ref{AutomorphProp}]
Since every automorphism of the graph $\Gamma$ induces an automorphism of $\Gamma^c$, 
we need only prove the conclusion for $\Delta^c _G$. 
First, any group automorphism of $G$ induces a graph automorphism of $\Delta^c _G$. 
If a nontrivial graph automorphism is induced as such, we are done. 
Otherwise Lemma \ref{CharCycLem} applies and $G$ is cyclic. 

In this case, let the prime factorisation of $\lvert G \rvert$ be $p_1 ^{a_1} \cdots p_m ^{a_m}$. 
Recall that for each divisor $d$ of $\lvert G \rvert$, 
there is a unique subgroup of $G$ of order $d$. 
If there exist $1 \leq i < j \leq m$ such that $a_i = a_j$ then 
swapping $p_i$ and $p_j$ induces a nontrivial permutation of the subgroups of $G$ 
which preserves adjacency in $\Delta^c _G$. 
If there exists $i$ for which $a_i \geq 3$ then 
the subgroups of $G$ of order $\lvert G \rvert/p_i$ and $\lvert G \rvert/p_i ^2$ 
are isolated points of $\Delta^c _G$, hence there is a nontrivial 
graph automorphism swapping them. 
If neither possibility pertains, 
then since $G$ has at least two subgroups we have $\lvert G \rvert = p_1 p_2 ^2$, 
and there is an automorphism of $\Delta^c _G$ swapping the subgroups of orders $p_2$ and $p_2 ^2$. 
\end{proof}

For many other finite graphs, $\coIG(\Gamma)$ is infinite. 

\begin{ex}
\item[(i)] If $\Gamma$ is the empty graph of order $n$,
then $\coIG(\Gamma)$ consists of the cyclic groups $C_{p^{n+1}}$
(for all primes $p$) and, in the case $n=2^{m-1} + m - 2$,
the generalized quaternion group $Q_{2^m}$
(see Theorem 1 of \cite{DevRaj}).
In particular, $\coIG(\Gamma)$ is infinite. 

\item[(ii)] If $G$ is a finite cyclic group, and $\lvert G \rvert $ 
has prime factorisation $p_1 ^{a_1} \cdots p_m ^{a_m}$, 
then for any other tuple $q_1 , \cdots , q_m$ of distinct prime numbers, 
the cyclic group $H$ of order $q_1 ^{a_1} \cdots q_m ^{a_m}$ 
has a subgroup lattice isomorphic to that of $G$, 
hence $\Delta_G ^c \cong \Delta_H ^c$. 
\end{ex}

Finally, one may ask which finite graphs have finite nonempty 
co-intersection genus and moreover, for which $n \in \mathbb{N}$ 
there exists a finite graph $\Gamma$ for which $\lvert \coIG(\Gamma) \rvert = n$. 

\begin{ex} \label{CoIntFinGenEx}
If $\Gamma = K_n$ is a finite complete graph,
then any group in $\coIG(\Gamma)$
is of the form $C_p \times C_p$;
$C_p \times C_q$ or $C_p \rtimes C_q$ for $q \mid (p-1)$
(see Lemma 2.5 and Proposition 2.6 of \cite{VisVad}).
In particular, $\coIG(K_2)$ is infinite;
$\coIG(K_n)$ is empty for $n-1 \geq 2$ composite,
and for $p=n-1 \geq 2$ prime, $\lvert \coIG(K_n) \rvert -1=\omega(p-1)$
is the number of distinct prime divisors of $p-1$.
Thus, the Klein four-group is
absolutely recognizable by its co-intersection graph. 
Further, it is known that for all $1 \leq k \leq 100$, 
there exists a prime $p$ such that $\omega(p-1)=k$ (even with $p-1$ squarefree) \cite{OEIS}. 
For such $p$, $\lvert \coIG(K_{p+1}) \rvert = k+1$. 
\end{ex}

Thus understanding the possible co-intersection genera of 
complete graphs is equivalent to the following Number Theory problem. 

\begin{conj} \label{PrimesConj}
Let $\mathbb{P}$ be the set of prime numbers and let $f : \mathbb{P} \rightarrow \mathbb{N}$ 
be given by $f(p) = \omega(p-1)$. Then $f$ is surjective. 
\end{conj}

Expert opinion appears to be that Conjecture \ref{PrimesConj} is very likely to be true, 
and ``follows in much stronger form from standard (difficult) conjectures'' \cite{Kowa}. 

\begin{propn} \label{InfManyGenusProp}
The function $f$ from Conjecture \ref{PrimesConj} has infinite image. 
Thus, there are infinitely many integers $k$ such that there exists 
a prime $p$ for which $\lvert \coIG(K_{p+1}) \rvert = k$. 
\end{propn}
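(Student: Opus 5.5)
We need to show that $f(p)=\omega(p-1)$ takes infinitely many values, i.e.\ is unbounded on primes. The second sentence then follows from Example \ref{CoIntFinGenEx}: for a prime $p$ with $\omega(p-1)=k$ we have $\lvert \coIG(K_{p+1}) \rvert = k+1$.

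The plan is to use Dirichlet's theorem on primes in arithmetic progressions. Fix $k \in \mathbb{N}$ and let $N_k = p_1 p_2 \cdots p_k$ be the product of the first $k$ primes. By Dirichlet's theorem there are infinitely many primes $p \equiv 1 \pmod{N_k}$. For any such $p$, every $p_i$ with $i \leq k$ divides $p-1$, so $f(p) = \omega(p-1) \geq k$.

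Since $k$ is arbitrary, $f$ is unbounded on $\mathbb{P}$. As its values are natural numbers, its image is an infinite subset of $\mathbb{N}$. Concretely, we can build a strictly increasing sequence of values inductively: having found a prime $p$ with $f(p)=k_j$, apply the argument with $k=k_j+1$ to obtain a prime $p'$ with $f(p') > k_j$, and set $k_{j+1}=f(p')$.

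Finally, for each value $k$ in the image of $f$, choose a prime $p$ with $\omega(p-1)=k$. Example \ref{CoIntFinGenEx} (resting on Lemma 2.5 and Proposition 2.6 of \cite{VisVad}) gives $\lvert \coIG(K_{p+1})\rvert = k+1$. Hence infinitely many integers occur as sizes of co-intersection genera of complete graphs. These integers are the shifted values $k+1$; the statement's ``$=k$'' should be read with this shift, and it does not affect infiniteness.

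There is no real obstacle here. The only point needing care is to confirm the count in Example \ref{CoIntFinGenEx}. The groups of order $p(p+1)$-type with $\Delta^c$ equal to $K_{p+1}$ are as follows. $C_p \times C_p$ has $p+1$ nontrivial proper subgroups. $C_p \rtimes C_q$ with $q \mid (p-1)$ has $1+p$ such subgroups, and there is one of these for each prime $q \mid p-1$. The groups $C_p\times C_q$ have only $2$ nontrivial proper subgroups, so they are irrelevant once $p+1 > 2$. This gives exactly $1+\omega(p-1)$ groups, as asserted there.
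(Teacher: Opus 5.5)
Your proof is correct and is essentially the paper's argument: Dirichlet's theorem applied to a modulus $Q$ that is a product of $n$ distinct primes gives a prime $p$ with $\omega(p-1)\geq n$, and the second sentence then follows from Example \ref{CoIntFinGenEx}. Your caveat about reading ``$=k$'' with a shift is unnecessary. The statement only claims that infinitely many integers occur as values of $\lvert \coIG(K_{p+1})\rvert$, and the values $\omega(p-1)+1$ already form an infinite set.
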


\begin{proof}
The second statement is immediate from the first and Example \ref{CoIntFinGenEx}. 
Let $q_1 , \ldots , q_n$ be any distinct prime numbers and let $Q = q_1 \cdots q_n$. 
By Dirichlet's Theorem, there exists a prime $p$ satisfying $p \equiv 1 \mod Q$. 
Thus $\omega(p-1) \geq n$. Since $n$ was arbitrary, the result follows.  
\end{proof}

Our next result is based on the tools for recognition by 
(co-)intersection graph developed in \cite{ShaKohNonSimple}. 

\begin{propn} \label{AbsRecogInftyProp}
There are infinitely many isomorphism classes of metacyclic finite groups $G$ 
such that $G$ is absolutely recognizable by (co-)intersection graph. 
\end{propn}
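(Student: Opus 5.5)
We need infinitely many metacyclic finite groups that are absolutely recognizable by (co-)intersection graph. The natural candidates are $G = C_p \rtimes C_{q^2}$, where $q$ is prime and $q^2 \mid p-1$, with the generator of $C_{q^2}$ acting faithfully on $C_p$; these are metacyclic. For a given prime $q$, Dirichlet's Theorem provides infinitely many primes $p \equiv 1 \bmod q^2$. Even so, the plan is to fix the family so that the graph itself records $p$ and $q$. The intended mechanism is that the number of Sylow $q$-subgroups is $p$, so the count of vertices of a given type in $\Delta_G^c$ records $p$. It then remains to show that any finite $H$ with $\Delta_H^c \cong \Delta_G^c$ is isomorphic to $G$.

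First I would compute $\Delta_G^c$ explicitly. Write $P = C_p$. The nontrivial proper subgroups of $G$ are the following.
\begin{itemize}
\item[(1)] The subgroup $P$ itself.
\item[(2)] The $p$ Sylow $q$-subgroups $Q_j \cong C_{q^2}$.
\item[(3)] Their order-$q$ subgroups $Q_j^q$. These are all distinct, since $C_G(P)=P$ forces $N_G(Q_j^q) = C_{q^2}$.
\item[(4)] $P \rtimes C_q$.
\end{itemize}
In $\Delta_G^c$ the adjacencies are as follows. The vertex $P$ is adjacent to every $Q_j$ and every $Q_j^q$. Distinct $q$-subgroups $Q_i, Q_j, Q_i^q, Q_j^q$ ($i \neq j$) meet trivially, since each $q$-subgroup lies in a unique Sylow. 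The vertex $PC_q$ contains all the $Q_j^q$ and meets each $Q_j$ in $Q_j^q$, so it is isolated. Hence $\Delta_G^c$ has $2p+2$ vertices: one isolated vertex, one vertex of degree $2p$, and $p$ pairs $\{Q_j, Q_j^q\}$. Each pair is non-adjacent internally and completely joined to all other pairs. This graph visibly encodes $p$, and, through the non-edges inside the pairs, the fact that the Sylow $q$-subgroup is cyclic of order $q^2$. However, $q$ itself is not encoded. So I would rather fix $q=2$ and take $G = C_p \rtimes C_4$ with $p \equiv 1 \bmod 4$, the generator acting by an element of order $4$. There are infinitely many such $p$ by Dirichlet, and the nonisomorphic groups give infinitely many classes.

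Then I would use the tools of \cite{ShaKohNonSimple}, together with the following direct argument, to show that any $H$ with $\Delta_H^c \cong \Delta_G^c$ is isomorphic to $G$. Subgroups of order a prime are exactly the minimal vertices. Cyclic $p$-groups have empty-graph components, as in the earlier Example. The main step is to extract from the graph that $H$ has a subgroup $N$ adjacent to all but two vertices. I would show that $N$ has prime order $r$ and is normal: it is adjacent to all conjugates of everything it misses, so it is unique of its kind. The remaining $2p$ vertices, forming $p$ chains of length two of trivially intersecting subgroups, should then be the cyclic Sylow $s$-subgroups $C_{s^2}$ of $H$ and their socles. This would give $|H| = r s^2$ with $n_s = p$.

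I expect the main obstacle to be pinning down $r$, $s$, and the action. From $n_s = p \mid r$ we get $r = p$, and $s^2 \mid p-1$ because the complement acts faithfully, as it has no normal subgroups other than the ones already counted. The extra isolated vertex forces $s^2 \neq$ anything beyond the structure described. But the prime $s$ is not determined by the graph: $C_p \rtimes C_{s^2}$ for any $s$ with $s^2 \mid p-1$ gives the same graph. I would therefore restrict to primes $p$ with $p-1 = 4m$ and $m$ squarefree and odd, so that $s=2$ is the only prime whose square divides $p-1$. The infinitude of such $p$ needs a sieve input: primes $p \equiv 5 \bmod 8$ with $(p-1)/4$ squarefree have positive density by a Mirsky-type result. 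I would cite that result, or alternatively realize recognizability through a different family, such as $C_p \rtimes C_{s^2}$ with the degree-$2p$ vertex structure refined, if that citation proves awkward.
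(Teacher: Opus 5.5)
The overall plan is right and matches the paper: the family $C_p\rtimes C_4$ with faithful action, your computation of $\Delta^c_G$, and the reduction to primes with $p-1=4m$, $m$ odd squarefree, so that $s=2$ is forced. The paper allows $p-1=2^e\bar p$ with $e\ge 2$ and combines Dirichlet's theorem with Theorem 6 of \cite{BrouZhou}. For your $e=2$ subfamily you still need a precise reference or the routine sieve argument, since Mirsky's theorem as usually stated concerns $p+a$ itself being squarefree, which fails whenever $4\mid p-1$.

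The genuine gap is the recognition step, which is the whole content of the proposition. The paper gets it from Theorem 3.1 of \cite{ShaKohNonSimple}. There, $P$ is the unique leaf of $\Delta_G$, and the nonsimple groups whose intersection graph has a unique leaf are exactly the groups $C_{p'}\rtimes C_{q^2}$. Then $\omega(\Delta^c_H)=p+1$ (the number of minimal subgroups) forces $H\cong C_p\rtimes C_{s^2}$ with faithful action. You use neither this classification nor a working substitute:
your reason ``adjacent to all conjugates of everything it misses'' gives at most normality of $N$ (it is the unique vertex of degree $2p$), not that $\lvert N\rvert$ is prime; ``subgroups of prime order are the minimal vertices'' is not something the graph sees; and the claim that the other $2p$ vertices ``should then be'' cyclic Sylow $s$-subgroups and their socles, with $\lvert H\rvert=rs^2$, is exactly what must be proved. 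Nothing you wrote excludes different primes in different pairs, or $r=s$.

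You can close it directly as follows. Distinct members of a clique contain distinct minimal subgroups, so the minimal subgroups of $H$ form a maximum clique. Every maximum clique of $\Delta^c_G$ is the apex plus one vertex from each pair. Hence $N$ has prime order $r$, and each pair is $\{A_i,B_i\}$ with $\lvert B_i\rvert=s_i$ prime and $B_i<A_i$.

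The isolated vertex $X$ meets, hence contains, every minimal subgroup, in particular $N$. The only non-neighbours of $A_i$ are $B_i$ and $X$, and $X\not\le A_i$ since $A_i\cap N=1$. So $B_i$ is the only nontrivial proper subgroup of $A_i$, i.e.\ $A_i\cong C_{s_i^2}$.

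Since $N\trianglelefteq H$ and the only vertices containing $N$ are $N$ and $X$, we get $NB_i=X$. Thus $\lvert X\rvert=rs_i$, and all $s_i$ equal a single prime $s$. Now $H$ has $p\ge 2$ subgroups of order $s^2$, so it is not cyclic, and hence $H=X\cup\bigcup_i A_i$. So some $A_i\not\le X$. Also $X$ is normal (the unique isolated vertex) and maximal (any vertex containing it contains $N$). Therefore $H=XA_i$, of order $rs^2$.

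If $r=s$, two subgroups of order $s^2$ in a group of order $s^3$ would meet nontrivially. So $r\ne s$, the $A_i$ are all the Sylow $s$-subgroups, and $p=n_s\mid r$ gives $r=p$. Finally, the action of $A_1$ on $N$ is nontrivial since $n_s>1$. A nontrivial kernel would be the order-$s$ subgroup of the normal subgroup $C_H(N)\cong C_{ps}$, hence normal and contained in every $A_i$, contradicting $A_1\cap A_2=1$. So the action is faithful and $s^2\mid p-1$.

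This argument also disposes of simple $H$, which citing a classification of nonsimple groups leaves implicit.
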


Let $\omega(\Gamma)$ be the clique number of the graph $\Gamma$.
We note that for $G$ a finite group, $\omega (\Delta^c _G)$
is precisely the number of minimal subgroups of $G$
(see \cite{VisVad} Proposition 3.1). 

\begin{proof}[Proof of Proposition \ref{AbsRecogInftyProp}]
    In Theorem 3.1 of \cite{ShaKohNonSimple} the finite nonsimple groups $G$ for which 
    $\Delta_G$ has a unique leaf are classified. 
    They are precisely groups of the form $C_p \rtimes C_{q^2}$ 
    (see Table 1 of \cite{ShaKohNonSimple}). 
    Among these, it is easy to see that those with a cyclic subgroup 
    of order $pq$ are distinguished from those without:
    for groups $G$ of the former type 
    we have $\omega (\Delta^c _G) = 2$, 
    whereas for the latter type 
    we have $\omega (\Delta^c _G) = p+1$. 
    For the same reason, among groups of the latter type $\Delta^c _G$ 
    detects the value of $p$. 

    Therefore, let $G_{p,q} = C_p \rtimes C_{q^2}$, 
    with the generator for $C_{q^2}$ acting as an automorphism of $C_p$ of order $q^2$ 
    (so that $q^2 \mid (p-1)$). 
    By the above, if $H$ is a finite group satisfying $\Delta^c _{G_{p,q}} \cong \Delta^c _H$, 
    then $H = G_{p,r}$ for $r$ a prime satisfying $r^2 \mid (p-1)$. 
    It therefore suffices to prove there are infinitely many primes $p$ for which 
    exactly one prime $q$ satisfies $q^2 \mid (p-1)$. 
    We shall achieve this with $q=2$. 
    First, by Dirichlet's Theorem, the set of primes $p \equiv 1 \mod 4$ 
    has relative density $1/2$ in the set of all primes. 
    Second, by Theorem 6 of \cite{BrouZhou}, 
    the set of primes of the form $p= 2^e \overline{p}$, for $\overline{p}$ 
    an odd squarefree number, has relative density $> 0.74$. 
    Thus the set of primes $p$ satisfying our condition has relative density 
    $> 0.24$, hence is infinite. 
\end{proof}

Turning to simple groups, it is known 
that if $T$ and $G$ are finite groups with isomorphic subgroup lattices 
and $T$ is nonabelian simple, then $G \cong T$ (see Theorem 7.8.1 of \cite{Schmidt}). 
Although the (co-)intersection graph retains much less information than the 
subgroup lattice, at present we do not have a counterexample to the following. 

\begin{qu} \label{SimpleRecogQ}
Is every nonabelian finite simple group absolutely recognizable by its 
(co-)intersection graph? 
\end{qu}

\begin{rmrk}
\normalfont
In \cite{ShaKohSimple} various families of finite simple groups are described for which 
Question \ref{SimpleRecogQ} has a positive answer (see Table 3 therein). 
For instance, if $p$ is a Sophie Germain prime, then $A_{2p+1}$ 
is absolutely recognizable by its (co-)intersection graph. 
It appears to be unknown whether there are infinitely many nonabelian finite simple groups which 
are absolutely recognizable by (co-)intersection graph: 
for instance it remains unknown whether there are infinitely many Sophie Germain primes, 
and there are similar number-theoretic uncertainties for each of the other 
items in Table 3 of \cite{ShaKohSimple}. 
\end{rmrk}

It seems likely that the two integers $\lvert V(\Delta^c _T) \rvert$ 
(the number of proper nontrivial subgroups of $T$) 
and $\omega (\Delta^c _T)$ (the number of minimal subgroups of $T$) 
is sufficient information to identify 
$T$ among all nonabelian finite simple groups. 
We therefore propose: 

\begin{conj} \label{SimpleRecogConj}
Let $\mathcal{S}$ be the class of nonabelian finite simple groups.
Then every $T \in \mathcal{S}$
is recognizable in $\mathcal{S}$ by its (co-)intersection graph.
\end{conj}

\begin{rmrk}
\normalfont
In support of Conjecture \ref{SimpleRecogConj}, 
using GAP we ascertained that there are no $T_1 , T_2 \in \mathcal{S}$ 
of order less than one million, for which $T_1 \ncong T_2$ 
but $\omega (\Delta^c _{T_1}) = \omega (\Delta^c _{T_2})$. 
In addition we computed the total number of subgroups 
for each group $T \in \mathcal{S}$ of order less than one million 
and found only one pair with the same total, 
namely $\PSL_2 (53)$ and $\PSL_2 (59)$. 
\end{rmrk}

In the meantime, we have the following easy observation, 
which allows recognition within any nested sequence 
of finite simple groups. 

\begin{lem}
Let $G$ be a nonabelian finite simple group and let $1 \neq H \lneq T$. Then $\omega(\Delta^c _T) > \omega(\Delta^c _H)$.
\end{lem}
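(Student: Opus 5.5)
Since $\omega(\Delta^c_G)$ is the number of minimal subgroups of $G$ (\cite{VisVad} Proposition 3.1), and minimal subgroups are exactly the cyclic subgroups of prime order, the statement reduces to a comparison of prime-order subgroups. (Here the hypothesis should read $1 \neq H \lneq T$ with $T$ nonabelian simple.) Every minimal subgroup of $H$ is also a minimal subgroup of $T$, so $\omega(\Delta^c_H) \leq \omega(\Delta^c_T)$. It therefore suffices to find a subgroup of prime order in $T$ that is not contained in $H$.

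Suppose, for a contradiction, that every element of prime order in $T$ lies in $H$. Let $X$ be the set of all elements of prime order in $T$. Then $X$ is nonempty, since $T$ is nontrivial and so by Cauchy's theorem contains an element of prime order. Moreover $X$ is closed under conjugation, so $\langle X \rangle$ is a nontrivial normal subgroup of $T$. By simplicity of $T$ we get $\langle X \rangle = T$. This is the same argument as in the proof of Lemma \ref{PrimeDivLem}, which used the elements of a single prime order $p$. On the other hand, $X \subseteq H$ gives $\langle X \rangle \leq H \lneq T$, a contradiction.

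Hence some $g \in T$ of prime order satisfies $g \notin H$. Then $\langle g \rangle$ is a minimal subgroup of $T$ and is not a subgroup of $H$. So the set of minimal subgroups of $H$ is a proper subset of the set of minimal subgroups of $T$, which gives the strict inequality $\omega(\Delta^c_T) > \omega(\Delta^c_H)$.

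There is no real obstacle. The only point needing care is the identification of the clique number with the number of minimal subgroups, which is quoted from \cite{VisVad}. Note also that a minimal subgroup of $H$ has prime order, so it is a minimal subgroup of $T$ as well.
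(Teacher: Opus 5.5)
Your proof is correct and takes the paper's approach: the paper's proof is the single observation that some element of prime order lies in $T \setminus H$, and you justify that observation the same way as in Lemma \ref{PrimeDivLem}, via the normal subgroup $\langle X \rangle = T$. One small caveat: if $\lvert H \rvert$ is prime, then $\Delta^c_H$ has no vertices, so $\omega(\Delta^c_H) = 0$ and not the number of minimal subgroups of $H$; the conclusion is unaffected, because you only need $\omega(\Delta^c_H)$ to be at most the number of minimal subgroups of $H$.
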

\begin{proof}
    There is at least one element of prime order in $T\setminus H$.
\end{proof}

\begin{ex}
For every finite graph $\Gamma$,
$\coIG(\Gamma)$ contains at most one finite alternating group of degree $\geq 5$.
\end{ex}

\section{Infinite groups} \label{InfiniteSect}

One may also define the co-intersection graph $\Delta^c _G$ 
for $G$ an infinite group. 
We recall that every infinite group has infinitely many subgroups: 
if $G$ contains an element of infinite order, 
then this follows from the fact that $\mathbb{Z}$ has infinitely many subgroups, 
whereas if $G$ is an infinite torsion group then it is enough to recall that 
$G$ is the union of its cyclic subgroups. 
Hence $\Delta^c _G$ is always an infinite graph. 

\begin{ex}
\normalfont
If $G = \mathbb{Z}$ or $G = C_{p^{\infty}}$ (the Pr\"{u}fer-$p$-group, for $p$ a prime number) 
then $\Delta^c _G$ is an infinite empty graph. 
By contrast, if there exists a prime number $p$ 
such that every proper nontrivial subgroup of $G$ is isomorphic to $C_p$, 
then $\Delta^c _G$ is an infinite complete graph. 
Groups of the latter type exist for all sufficiently large prime 
numbers $p$; they are known as \emph{Tarski monsters} \cite{Olsh}. 
Consequently, the countably infinite empty graph 
and the countably infinite complete graph have infinite co-intersection genus. 
\end{ex}

At present we know very little about the general shape of co-intersection 
graphs of infinite groups, such that even the following remains unknown. 

\begin{qu} \label{InfiniteUnivQu}
Let $\Gamma$ be an infinite graph. 
Does there exist an infinite group $G$ such that $\Gamma \cong \Delta^c _G$? 
\end{qu}

Note that the analogue of Question \ref{InfiniteUnivQu}, for which we 
replace the word ``infinite'' by ``finite'', 
has a negative answer thanks to Lemma \ref{Diam3Lemma}. 
The proof of Lemma \ref{Diam3Lemma} cannot be extended to the setting of 
infinite groups, since a proper subgroup of an infinite group need not 
contain a minimal subgroup. 
Therefore we have the following special case of Question \ref{InfiniteUnivQu}, 
which was posed as a question to us by I. Wong. 

\begin{qu}
    Does there exist an infinite group $G$ such 
    that $\Delta^c _G$ has a connected component of diameter at least four? 
    What about a connected component of infinite diameter? 
\end{qu}

\subsection*{Acknowledgments} This project began at the conference 
``Topics in Group Theory - on the occasion of Andrea Lucchini's 60th(+) birthday'', 
held at the University of Padova in September 2024. We wish to record our gratitude to 
the conference organisers for providing an inspiring environment in which to work.

\end{document}